\newcommand{\p}{{\mathbb P}}
\def\u{{u_{\nu}}}
\def\v{\bar{u}}
\def\w{\bar{\tau}}
\newcommand{\R}{{\mathbb R}}
\def\div{ \hbox{\rm div}\,  }
\newcommand\Z{{\mathbb{Z}}}
\def\La{\Lambda}
\def\aa{\phi}
\def\ddj{\dot \Delta_j}
\def\f{\frac}
\theoremstyle{plain}
\newtheorem{theorem}{Theorem}[section]
\newtheorem{lemma}[theorem]{Lemma}
\newtheorem{definition}[theorem]{Definition}
\newtheorem{remark}[theorem]{Remark}
\numberwithin{equation}{section}
\begin{document}

\title[Global well-posedness  and inviscid limits of the Oldroyd models]{Global  well-posedness  and inviscid limits of the generalized Oldroyd type models}

\author[Xiaoping Zhai, Yuanyuan Dan,  Yongsheng Li]{Xiaoping Zhai,  Yuanyuan Dan,  Yongsheng Li}

\address[X. Zhai]{ School of Mathematics and Statistics, Shenzhen University, Shenzhen, 518060, China.} \email{zhaixp@szu.edu.cn }

\address[Y. Dan]{Big data and Educational Statistics Application Laboratory, School of Statistics and Mathematics, Guangdong University of Finance and Economics,
Guangzhou, 510320, China.} \email{danyy2014@126.com (Corresponding Author)}

\address[Y. Li]{ School of Mathematics, South China University of Technology, Guangzhou, 510640, China.} \email{yshli@scut.edu.cn}

\begin{abstract}
We  obtain  the global small solutions to the generalized  Oldroyd-B model   without damping on the stress tensor in $\R^n$.
Our result  give positive answers partially to the question proposed by Elgindi and  Liu (Remark 2 in Elgindi and  Liu [J Differ
Equ 259:1958--1966, 2015)].
The proof relies heavily on the trick of transferring dissipation from $u$ to $\tau$, and a new commutator estimate which may be of interest for future works.
Moreover, we prove a global result of inviscid limit of two dimensional Oldroyd type models in the Sobolev spaces.
The convergence rate is also obtained simultaneously.
	\end{abstract}
\maketitle

\section{ Introduction and the main results}
The Oldroyd-B model is a typical prototypical model for viscoelastic fluids, which describes the motion of some viscoelastic flows:
\begin{eqnarray}\label{mmm}
\left\{\begin{aligned}
&u_t + u\cdot \nabla u -  \nu \Delta u + \nabla \pi =  K_1 \div \tau,  \\
&\tau_t + u\cdot \nabla \tau   + \beta \tau + Q(\tau, \nabla u) =  K_2 D (u),\\
&\div u = 0,
\end{aligned}\right.
\end{eqnarray}
where  $\nu> 0$, $\beta\ge0$, $K_1\ge 0, K_2\ge 0$, $u$ stands for the velocity of the
fluid
and $\pi$ for the pressure and $\tau$ is a symmetric tensor. 
 The parameters
$\nu $, $K_2$ correspond respectively to $\frac{\theta}{\mathrm{Re}}$ and $\frac{2(1-\theta)}{\mathrm{Re} \ \mathrm{We}}$, where $\mathrm{Re}$ is the Reynolds number, $\theta$ is
the ratio between the so-called relaxation and retardation times and $\mathrm{We}$ is the Weissenberg
number which measures the elasticity of the fluid.
We denote by
$\Omega(u)$  the vorticity tensor  and $D(u)$  the deformation tensor, namely
$$
\Omega(u) = \frac{1}{2} \big( \nabla u - (\nabla u)^{T} \big),\quad D(u) = \frac{1}{2} \big( \nabla u + (\nabla u)^{T} \big).
$$
$ Q(\tau, \nabla u)$ is a given bilinear form which can be chosen as
$$
Q(\tau, \nabla u)= \tau \Omega(u) - \Omega(u) \tau + b(D(u) \tau + \tau D(u)),
$$
$b$ is a parameter in $[-1,1]$.

The Oldroyd-B model is a classical model for dilute solutions of polymers suspended in a
viscous incompressible solvent (see \cite{bird}).
 The system \eqref{mmm} describes the motion of the incompressible
fluid satisfying the Oldroyd constitutive law (see \cite{Oldroyd}). About the derivation of the system \eqref{mmm}, the interested readers can refer to \cite{lin2012}, here we omit it.

The study of the incompressible Oldroyd-B model started by a pioneering paper by Guillop\'{e} and Saut in \cite{GS}, the authors proved that the strong solutions are local well-posed in the
Sobolev space $H^s$. They \cite{GS2} also showed that these solutions are global if the coupling
parameter and the initial data are small enough.
The extensions of these results to the $L^p$-setting were given by Fern\'andez-Cara, Guill\'en and Ortega.
 In 2001, Chemin and Masmoudi \cite{chemin}  studied the local and global well-posedness of (\ref{mmm}) in $\R^n$. But for the global result in $L^p$ framework, they needed the small coupling parameter. This gap was filled in a  recent work by Zi, Fang and Zhang \cite{zuiruizhao} with the method based on Green's matrix of the linearized system.  Furthermore, some blowup criteria for local solutions were also established in \cite{chemin}. An improvement of the Chemin and Masmoudi's blow-up criterion was presented  by Lei, Masmoudi and Zhou \cite{leizhenjde}. In addition, we can refer to \cite{chenqionglei1} for more global existence results in generalized spaces.
 Very recently, for the case $\nu=0$,  adding an exact diffusion $-\mu \Delta \tau$ in the second equation of  system \eqref{mmm}, Elgindi and  Rousset \cite{elgindicpam} proved the global existence of smooth solutions in $\R^2$  with  the arbitrary initial data for $Q(\tau,\nabla u)=0$, $K_2\in\R$, $K_1,\beta\ge0$ and small initial data for $Q(\tau,\nabla u)\neq0$,  $K_1>0, K_2>0, \beta>0$, respectively. For the same model considered in \cite{elgindicpam},   the global small solutions in $\R^3$ was further obtained
 by Elgindi and Liu \cite{elgindijde} in Sobolev spaces
 $H^s(\R^3), s>\frac{5}{2}$.
Most recently, by
constructing two special time-weighted energies,
Zhu  \cite{zhuyi} obtained the global smooth  solutions to  system \eqref{mmm} with $ \beta=0$, $\mu>0, K_1>0, K_2>0$ in $\R^3$.
For  the same model considered in \cite{zhuyi}, Chen and Hao \cite{chenqionglei} established the global small solutions in critical Besov spaces for more general dimension $\R^n (n\ge 2)$. This result was further extended by Zhai \cite{zhaixiaoping} to  a critical $L^p$ framework
   which implies that
 a class of highly oscillating
initial data are allowed.
 There are other lots of excellent works have been done to Oldroyd model and related models (see \cite{chenqionglei1},   \cite{constanin},
\cite{Larson},     \cite{LZ}--\cite{zhangping2015}, \cite{zhaixiaoping2}).

It  should mention that,  in  Remark 2 of the paper by Elgindi and Liu  \cite{elgindijde}, the authors expect the global small solutions of the generalized Oldroyd type models with $\Lambda^{\alpha_1}$ dissipation on $u$ and $\Lambda^{\alpha_2}$ dissipation on $\tau$ in the two and three dimensions,
which  motivates us to consider the following
 generalized  Oldroyd-B model without damping mechanism:
\begin{eqnarray}\label{m}
\left\{\begin{aligned}
&\partial_t u+ u\cdot\nabla u+\nu \Lambda^\alpha u+\nabla\pi- K_1 \div \tau=0,\\
&\partial_t\tau + u\cdot \nabla \tau   + Q(\tau, \nabla u)  - K_2 D (u)=0,\\
&\div u =0,
\end{aligned}\right.
\end{eqnarray}
with $\Lambda\stackrel{\mathrm{def}}{=}\sqrt{-\Delta},$
 and  initial data satisfy
$$u(x,0)=u_0(x),\quad \tau(x,0)=\tau_0(x).$$


Now, we can state the  main theorem of the paper:

\begin{theorem}\label{th1}
Let $n\ge 2,$ $\nu,  K_1, K_2 >0$.  For any $1<\alpha\le 2$,  $\tau_0\in \dot{B}_{2,1}^{\frac n2+1-\alpha}\cap\dot{B}_{2,1}^{\frac {n}{2}}(\R^n)$, $u_0\in \dot{B}_{2,1}^{\frac n2+1-\alpha}(\R^n)$ with $\div u_0 = 0$.
 If  there exists a positive constant $c_0$ such that,
\begin{align}\label{smallness2}
\|u_0\|_{\dot{B}_{2,1}^{\frac n2+1-\alpha}}+\|\tau_0\|_{\dot{B}_{2,1}^{\frac n2+1-\alpha}\cap\dot{B}_{2,1}^{\frac {n}{2}}}\leq c_0,
\end{align}
then
the system \eqref{m} has a unique global solution $(u,\tau)$ so that for any $T>0$
\begin{align*}
&u\in C_b([0,T );{\dot{B}}_{2,1}^{\frac {n}{2}+1-\alpha}(\R^n))\cap L^{1}
([0,T];{\dot{B}}_{2,1}^{1+\frac {n}{2}}(\R^n)),\
\tau\in C_b([0,T );\dot{B}_{2,1}^{\frac n2+1-\alpha}\cap\dot{B}_{2,1}^{\frac {n}{2}}(\R^n)),\\
&(\Lambda^{-1}\p\div\tau)^\ell\in L^{1}
([0,T];{\dot{B}}_{2,1}^{\frac n2+1}(\R^n)),\quad(\Lambda^{-1}\p\div\tau)^h\in L^{1}
([0,T];{\dot{B}}_{2,1}^{\frac n2+2-\alpha}(\R^n)).
\end{align*}
Here $\p=\mathcal{I}-\mathcal{Q}\stackrel{\mathrm{def}}{=}\mathcal{I}-\nabla\Delta^{-1}\div$  is the projection operator. One refers to Section 2 for the definitions of   the Besov space $\dot{B}_{2,1}^{s}(\R^n)$ and $f^\ell$, $f^h.$
\end{theorem}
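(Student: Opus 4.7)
The plan is to extract the missing dissipation on the stress from the velocity equation through an auxiliary unknown, to diagonalise the resulting linear system in each dyadic frequency shell, and to close a Chemin--Lerner energy estimate in which the new commutator estimate is the central difficulty. First I would set
$$w\stackrel{\mathrm{def}}{=}\Lambda^{-1}\p\div\tau,$$
and apply $\p$ to the momentum equation and $\Lambda^{-1}\p\div$ to the stress equation. Using $\p\nabla\pi=0$ and the identity $\p\div D(u)=\f12\Delta u$ for divergence-free $u$, this produces the coupled dissipative system
\begin{align*}
\partial_t u+\nu\Lambda^\alpha u-K_1\Lambda w&=-\p(u\cdot\nabla u),\\
\partial_t w+\f{K_2}{2}\Lambda u&=-\Lambda^{-1}\p\div\bigl(u\cdot\nabla\tau+Q(\tau,\nabla u)\bigr).
\end{align*}
The full $\tau$ is transported with source $K_2 D(u)-Q(\tau,\nabla u)$ and is controlled separately in the enlarged space $\dot B^{n/2+1-\alpha}_{2,1}\cap\dot B^{n/2}_{2,1}$, so that the embedding $\dot B^{n/2}_{2,1}\hookrightarrow L^\infty$ handles products with $\tau$.

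Second, I would analyse the $2\times 2$ symbol $\bigl(\begin{smallmatrix}\nu|\xi|^\alpha & -K_1|\xi|\\ K_2|\xi|/2 & 0\end{smallmatrix}\bigr)$ of the linearised $(u,w)$-system. Its eigenvalues satisfy $\lambda_++\lambda_-=\nu|\xi|^\alpha$ and $\lambda_+\lambda_-=K_1K_2|\xi|^2/2$: for $|\xi|\lesssim 1$ they are complex conjugate with real part $\sim\nu|\xi|^\alpha$, while for $|\xi|\gtrsim 1$ one has $\lambda_+\sim\nu|\xi|^\alpha$ and $\lambda_-\sim|\xi|^{2-\alpha}$. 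This is precisely the dichotomy reflected in the statement, where $w^\ell$ inherits a full $\Lambda^\alpha$ dissipation while $w^h$ gains only $\Lambda^{2-\alpha}$. At the dyadic level I would add to $\f12(\|\ddj u\|_{L^2}^2+\|\ddj w\|_{L^2}^2)$ a Lyapunov cross term $\kappa\langle\ddj u,\,\Lambda^{1-\alpha}\ddj w\rangle_{L^2}$ (low frequencies) or $\kappa\langle\ddj u,\,\Lambda^{-1}\ddj w\rangle_{L^2}$ (high frequencies), with $\kappa$ small; the cross terms convert the $\Lambda u$ coupling in the $w$-equation into genuine dissipation on $w$.

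The main obstacle is the nonlinearity $\Lambda^{-1}\p\div(u\cdot\nabla\tau)$ in the $w$-equation, since $u\cdot\nabla\tau$ has the same regularity as $\tau$ and $\Lambda^{-1}\p\div$ is a zeroth-order Riesz-type multiplier that does not commute with multiplication by $u$. I would split it as
$$\Lambda^{-1}\p\div(u\cdot\nabla\tau)=u\cdot\nabla w+[\Lambda^{-1}\p\div,\,u\cdot\nabla]\tau,$$
and treat the first summand by Danchin-type transport commutator estimates. The second summand requires the new commutator estimate of the form
$$\sum_{j\in\Z}2^{js}\bigl\|\ddj[\Lambda^{-1}\p\div,\,u\cdot\nabla]\tau\bigr\|_{L^2}\lesssim\|\nabla u\|_{\dot B^{n/2}_{2,1}}\|\tau\|_{\dot B^{s}_{2,1}}$$
for $s\in\{\f n2+1-\alpha,\,\f n2\}$; I would prove it by Bony's paraproduct decomposition, isolating the mean-frequency piece in which $\Lambda^{-1}\p\div$ acts on a spectrally localised factor and exploiting a genuine Coifman--Meyer-type cancellation so that the commutator behaves as a product $\nabla u\cdot\tau$ with no loss of derivatives. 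The source $Q(\tau,\nabla u)$ is bilinear and handled by standard Besov product laws once $\tau\in\widetilde L^\infty_T\dot B^{n/2}_{2,1}$ and $\nabla u\in L^1_T\dot B^{n/2}_{2,1}\hookrightarrow L^1_T L^\infty$ are in hand; the transport term $u\cdot\nabla\tau$ in the $\tau$-equation is similarly controlled.

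Finally I would close a bootstrap on the functional
\begin{align*}
X(T)&=\|u\|_{\widetilde L^\infty_T\dot B^{n/2+1-\alpha}_{2,1}}+\|u\|_{L^1_T\dot B^{n/2+1}_{2,1}}+\|\tau\|_{\widetilde L^\infty_T(\dot B^{n/2+1-\alpha}_{2,1}\cap\dot B^{n/2}_{2,1})}\\
&\quad+\|w^\ell\|_{L^1_T\dot B^{n/2+1}_{2,1}}+\|w^h\|_{L^1_T\dot B^{n/2+2-\alpha}_{2,1}},
\end{align*}
establishing $X(T)\le C\bigl(X(0)+X(T)^2\bigr)$, so that the smallness condition \eqref{smallness2} and a standard continuation argument yield $X(T)\le 2CX(0)$ for all $T>0$. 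Local existence in this functional class follows by a Friedrichs approximation combined with uniform bounds in the same norm, and uniqueness is obtained by an $L^2$-type estimate on the difference of two solutions, again using the auxiliary unknown $w$ to avoid derivative loss in the $u$--$\tau$ coupling.
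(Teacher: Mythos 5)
Your overall strategy coincides with the paper's: extract the hidden damping of $\phi\stackrel{\mathrm{def}}{=}\Lambda^{-1}\p\div\tau$ from the coupled $(u,\phi)$ system, treat low and high frequencies differently, and close a Chemin--Lerner bootstrap of the type $X(T)\le C(X(0)+X(T)^2)$. The implementations differ. The paper does not use cross-term Lyapunov functionals; it introduces the auxiliary unknown $w=\Lambda^{\alpha-1}\phi-u$, sums the dyadic energies of $(\phi,u,w)$ with weights $(1,1-\eta,\eta)$ in low frequencies and works with $(u,w)$ in high frequencies, and then recovers the full smoothing $u^h\in L^1_t\dot B^{\frac n2+1}_{2,1}$ by a second pass through the $u$-equation (compare \eqref{G22+1} and \eqref{G22+2}), using the already established $\phi^h\in L^1_t\dot B^{\frac n2+2-\alpha}_{2,1}$. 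The change of unknown produces the fractional commutator $[\Lambda^{\alpha-1},u\cdot\nabla]\phi$, and that is where the genuinely new ingredient, Lemma \ref{xin} (endpoint $\dot B^{\frac n2}_{2,1}$ commutator estimate for $[\Lambda^{s},u\cdot\nabla]$ with $\div u=0$), is needed. The commutator you single out as the central difficulty, $[\Lambda^{-1}\p\div,u\cdot\nabla]\tau$, involves a zero-order multiplier and is handled by the standard estimate of Remark \ref{com}; it is not where the novelty lies.

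There is, however, a concrete gap in your proposal as written: the cross terms are mis-specified and contradict your own spectral analysis. Since $\phi$ gains $|\xi|^{\alpha}$ at low frequencies and $|\xi|^{2-\alpha}$ at high frequencies, the cross terms must be (up to sign) $\langle\ddj u,\Lambda^{\alpha-1}\ddj w\rangle$ for $2^j\lesssim1$ and $\langle\ddj u,\Lambda^{1-\alpha}\ddj w\rangle$ for $2^j\gtrsim1$. With your low-frequency choice $\Lambda^{1-\alpha}$, the induced bad term $\kappa\tfrac{K_2}{2}\|\Lambda^{1-\frac\alpha2}\ddj u\|_{L^2}^2$ carries the weight $2^{(2-\alpha)j}$, which for $\alpha>1$ dominates the available dissipation $\nu2^{\alpha j}\|\ddj u\|_{L^2}^2$ as $j\to-\infty$, so no smallness of $\kappa$ closes the low-frequency estimate; with your high-frequency choice $\Lambda^{-1}$ you only generate $\|\ddj w\|_{L^2}^2$, i.e. $\phi^h\in L^1_t\dot B^{\frac n2}_{2,1}$ instead of the claimed $L^1_t\dot B^{\frac n2+2-\alpha}_{2,1}$. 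The latter loss is not cosmetic: the coupled functional only yields the weaker rate $\min(2^{\alpha j},2^{(2-\alpha)j})$ for the pair, so the stated bound $u\in L^1_t\dot B^{\frac n2+1}_{2,1}$ must be recovered by re-inserting $\Lambda\phi$ as a source in the $u$-equation, which requires precisely $\phi^h\in L^1_t\dot B^{\frac n2+2-\alpha}_{2,1}$; and without $\|u\|_{L^1_t\dot B^{\frac n2+1}_{2,1}}$, equivalently $\|\nabla u\|_{L^1_t L^\infty}$, the $\dot B^{\frac n2}_{2,1}$ estimate of $\tau$ and all the product and commutator bounds fail to close. With the exponents corrected and this extra bootstrap step for $u^h$ added, your scheme becomes a workable variant of the paper's argument.
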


\begin{remark}\label{13323}
In Remark 1.3 of the paper by  Elgindi and Rousset in \cite{elgindicpam}, they expect the global solutions of the generalized version of \eqref{m} in $\R^2$  with $\Lambda^{2\alpha_1}u$, $\Lambda^{2\alpha_2}\tau$ and   $\alpha_1+\alpha_2=1,$ if neglecting the effect of the quadratic form $Q(\tau, \nabla u)$. To our knowledge, this problem is still open. The Theorem \ref{th1} brings us closer to this more interesting case, we hope we can give a positive answer about this problem in the further.
\end{remark}
\begin{remark}\label{23}
If $\alpha=2$, the above Theorem \ref{th1} coincides with the results by Chen and Hao  \cite{chenqionglei} and Zhai  \cite{zhaixiaoping}.
 Compared to \cite{chenqionglei}, \cite{zhaixiaoping}, we obtain the global small solutions with less dissipation for $u$.
\end{remark}

\begin{remark}\label{26}
For any $0<\alpha\le 1$,  we cannot obtain any smoothing effect of $\Lambda^{-1}\p\div\tau$, thus, we don't close our energy   estimates, a new method need to be introduced to deal with this difficulty.
\end{remark}
\begin{remark}\label{2222}
Let us finally say a few words on our functional setting.
As we all know that an important quantity for the wellposedness  problem of the fluid is $\|\nabla u\|_{L^1_T(L^\infty)}$.  By using the embedding relation ${\dot{B}}_{2,1}^{\frac {n}{2}}(R^n)\hookrightarrow L^\infty(\R^n)$, we only need to control the norm of $\|\nabla u\|_{L^1_T({\dot{B}}_{2,1}^{\frac {n}{2}})}.$ Now from the maximal smoothing effect of the fractional heat equation in \eqref{m}, the best space of the initial velocity should be ${\dot{B}}_{2,1}^{\frac {n}{2}+1-\alpha}(R^n)$.

\end{remark}

Next, we shall prove a global result of inviscid limit of the following model in the Sobolev spaces:
\begin{eqnarray}\label{m1}
\left\{\begin{aligned}
&\partial_t \u+ \u\cdot\nabla \u-\nu \Delta \u+\nabla \pi_\nu-  \div \tau_\nu=0,\\
&\partial_t\tau_\nu + \u\cdot \nabla \tau_\nu   -\Delta\tau_\nu-  D (\u)=0,\\
&\div \u =0,\quad\quad\quad\quad\quad\quad\quad\quad\quad\quad\quad\quad\quad\quad x\in \R^2, \quad t>0,\\
&(u_\nu,\tau_\nu)|_{t=0}=(u_0,\tau_0).
\end{aligned}\right.
\end{eqnarray}
Let $\nu=0$ in \eqref{m1}, that is, considering the following system:
\begin{eqnarray}\label{m2}
\left\{\begin{aligned}
&\partial_t u+ u\cdot\nabla u+\nabla \pi-  \div \tau=0,\\
&\partial_t\tau + u\cdot \nabla \tau   -\Delta\tau-  D (u)=0,\\
&\div u =0,\\
&(u,\tau)|_{t=0}=(u_0,\tau_0).
\end{aligned}\right.
\end{eqnarray}
 Elgindi and  Rousset \cite{elgindicpam} obtained the following theorem of \eqref{m2}:

\begin{theorem}\label{global}(see \cite{elgindicpam})
Assume that $(u_0,\tau_0)\in H^s(\R^2)$ with $s\in(2,\infty)$. Then  \eqref{m2} has a unique global solution $(u,\tau)$ satisfying , for any $T>0,$
$$(u,\tau)\in C([0,T];H^s(\R^2)).$$
\end{theorem}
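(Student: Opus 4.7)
Local existence and uniqueness of an $H^s$ solution on a maximal interval $[0,T^\ast)$ is standard: Friedrichs mollification of the nonlinearities, uniform a priori $H^s$ bounds via Kato--Ponce commutator estimates on a short time interval, compactness, and an $L^2$ estimate on the difference for uniqueness. The whole task reduces to showing that no norm blows up in finite time.

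The starting point is the basic energy identity. Testing the first equation against $u$ and the second against $\tau$, and using symmetry of $\tau$ to write $\int u\cdot\div\tau\,dx=-\int D(u){:}\tau\,dx$, the two coupling contributions cancel exactly and one obtains
\begin{equation*}
\|u(t)\|_{L^2}^2+\|\tau(t)\|_{L^2}^2+2\int_0^t\|\nabla\tau(s)\|_{L^2}^2\,ds = \|u_0\|_{L^2}^2+\|\tau_0\|_{L^2}^2 =: E_0.
\end{equation*}
Applying $\Lambda^s$ to both equations, repeating this cancellation at the $H^s$ level, and controlling commutators by Kato--Ponce gives
\begin{equation*}
\tfrac{d}{dt}\bigl(\|u\|_{H^s}^2+\|\tau\|_{H^s}^2\bigr)+\|\nabla\tau\|_{H^s}^2 \le C\bigl(\|\nabla u\|_{L^\infty}+\|\nabla\tau\|_{L^\infty}\bigr)\bigl(\|u\|_{H^s}^2+\|\tau\|_{H^s}^2\bigr).
\end{equation*}
Since the control of $\|\nabla\tau\|_{L^1_T L^\infty}$ follows, via maximal regularity applied to the heat equation satisfied by $\tau$, from $\|u\|_{L^\infty_T H^s}$, the heart of the matter is an a priori bound on $\|\nabla u\|_{L^1_T L^\infty}$.

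To obtain it, work with the two-dimensional vorticity $\omega=\partial_1 u_2-\partial_2 u_1$. Taking curl of the momentum equation yields
\begin{equation*}
\partial_t\omega+u\cdot\nabla\omega = \mathrm{curl}\,\div\tau=(\partial_1^2-\partial_2^2)\tau_{12}+\partial_1\partial_2(\tau_{22}-\tau_{11}),
\end{equation*}
whose right-hand side is two derivatives of $\tau$ and cannot be absorbed into $\|\nabla\tau\|_{L^2_T L^2}$ alone. The key trick is to introduce the zero-order auxiliary variable $\Gamma:=(-\Delta)^{-1}\mathrm{curl}\,\div\tau$. Since $\Gamma$ is a bounded Fourier multiplier of $\tau$ of order zero, $\|\Gamma\|_{L^p}\lesssim_p\|\tau\|_{L^p}$ for $1<p<\infty$. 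Applying $(-\Delta)^{-1}\mathrm{curl}\,\div$ to the $\tau$-equation and using $\div D(u)=\tfrac12\Delta u$ together with $\div u=0$, one finds that $\Gamma$ satisfies a heat equation
\begin{equation*}
\partial_t\Gamma-\Delta\Gamma+\tfrac12\,\omega = -(-\Delta)^{-1}\mathrm{curl}\,\div(u\cdot\nabla\tau),
\end{equation*}
while simultaneously $\partial_t\omega+u\cdot\nabla\omega=-\Delta\Gamma$. The coupled system for $(\omega,\Gamma)$ is partially parabolic: testing $\omega$ against $|\omega|^{p-2}\omega$ and combining with the parabolic gain for $\Gamma$ closes a Yudovich-type bound $\|\omega\|_{L^\infty_T L^p}+\|\nabla\Gamma\|_{L^2_T L^p}\le C(T,p,E_0,\|\tau_0\|_{L^p\cap L^2})$ with only logarithmic dependence on $p$. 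A standard log-interpolation then delivers the desired $\|\nabla u\|_{L^1_T L^\infty}<\infty$.

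The main obstacle is precisely the closure of this coupled estimate: the forcing $\tfrac12\omega$ in the $\Gamma$ equation has no sign, so one must carefully exploit the structural cancellation between $\omega$ and $\Delta\Gamma$ across the two equations, and control the transported nonlinearity $(-\Delta)^{-1}\mathrm{curl}\,\div(u\cdot\nabla\tau)$ by a delicate commutator analysis, so that the resulting Gr\"onwall inequality remains integrable on every finite interval $[0,T]$. Once this is in place, the $H^s$ propagation of the second paragraph closes and the local solution extends globally, proving Theorem~\ref{global}.
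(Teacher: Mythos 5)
You should first note that the paper itself offers no proof of this statement: Theorem \ref{global} is quoted as a black box from Elgindi--Rousset \cite{elgindicpam} and only used as an ingredient in Theorem \ref{th2}, so what you have written is in effect an attempted reconstruction of the Elgindi--Rousset argument, and it has to be judged on its own.

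Judged that way, there is a genuine gap, and it sits exactly at the decisive point. Your reductions are fine (the $L^2$ identity, the $H^s$ estimate via Kato--Ponce, the reduction to $\|\nabla u\|_{L^1_TL^\infty}$, and the introduction of the zero-order quantity $\Gamma=(-\Delta)^{-1}\mathrm{curl}\,\div\tau$), but the proposed closure is not carried out and, as stated, does not work: testing $\partial_t\omega+u\cdot\nabla\omega=-\Delta\Gamma$ against $|\omega|^{p-2}\omega$ produces the pairing $\int\Delta\Gamma\,|\omega|^{p-2}\omega\,dx$, which the ``parabolic gain'' $\nabla\Gamma\in L^2_tL^p$ cannot absorb unless you move a derivative onto $|\omega|^{p-2}\omega$ --- and no derivative of $\omega$ is available from a pure transport equation. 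You acknowledge this yourself by calling the closure ``the main obstacle'', which is an admission that the heart of the proof is missing. The actual mechanism is not a coupled Gr\"onwall argument but a change of unknown that makes the cancellation explicit: adding your two equations and commuting the transport term, the quantity $\Gamma_1:=\omega+\Gamma$ satisfies
\begin{equation*}
\partial_t\Gamma_1+u\cdot\nabla\Gamma_1+\tfrac12\,\Gamma_1=\tfrac12\,\Gamma-\big[(-\Delta)^{-1}\mathrm{curl}\,\div,\;u\cdot\nabla\big]\tau,
\end{equation*}
a \emph{damped} transport equation whose right-hand side is zero order in $\tau$ plus a commutator of a zero-order multiplier with $u\cdot\nabla$. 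Bounds on $\Gamma_1$ (uniform in time) then follow from a commutator estimate together with the energy and heat-smoothing bounds on $\tau$, and $\omega=\Gamma_1-\Gamma$ inherits them; only after that can the logarithmic estimate for $\|\nabla u\|_{L^\infty}$ and the $H^s$ propagation of your second paragraph be run. Without performing this substitution and proving the accompanying commutator and $\Gamma$-estimates, the claimed Yudovich-type bound is unsubstantiated, so the proposal does not constitute a proof of the theorem.
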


The aim of this paper is to analyze the inviscid limit problem and we will show strong convergence of the solutions $(\u,\tau_\nu)$ of the system \eqref{m1} to the one of \eqref{m2} in the same space of initial data. More precisely, we obtain the following theorem:
\begin{theorem}\label{th2}
Let $\nu>0$ and $(u_0,\tau_0)\in H^\sigma(\R^2)$ with $\sigma>4.$  For any $T>0,$ there exists $\nu_0=\nu_0(T)>0$ such that, for $0<\nu\le\nu_0,$ \eqref{m1} has  a unique global solution satisfying $(\u,\tau_\nu)\in C([0,T];H^\sigma(\R^2)).$
Moreover, for any $0\le s \le \sigma-2$ and $0<\nu\le \nu_0,$ we have
$$\lim_{\nu\to0}\|(\u,\tau_\nu)-(u,\tau)\|_{L^\infty_T(H^s)}=0.$$
\end{theorem}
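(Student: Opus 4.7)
The plan is to proceed in two stages: first to establish a uniform-in-$\nu$ global solution of \eqref{m1} in $H^\sigma$, then to control the difference $(U,T):=(u_\nu-u,\tau_\nu-\tau)$ in the lower Sobolev space $H^s$ with $0\le s\le\sigma-2$. The two-derivative gap between the uniform regularity $H^\sigma$ and the convergence level $H^s$ is precisely what will let us absorb the vanishing-viscosity forcing term $\nu\Delta u$ as an $O(\nu)$ source in the difference equation, while leaving the transport nonlinearities controllable by Sobolev embedding.

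For the uniform global existence of \eqref{m1}, I would mirror the scheme used by Elgindi and Rousset in proving Theorem \ref{global}. Since \eqref{m1} differs from \eqref{m2} only by the additional nonnegative dissipation $-\nu\Delta u_\nu$, the same energy argument applies; the basic $L^2$ cancellation between $-\div\tau_\nu$ and $-D(u_\nu)$ leaves $-\Delta\tau_\nu$ as the essential source of dissipation, so the resulting bound is intrinsically $\nu$-independent. Higher-order $H^\sigma$ estimates will be obtained by Kato--Ponce commutator inequalities, combined in two dimensions with a logarithmic Sobolev interpolation for $\|\nabla u_\nu\|_{L^\infty}$ via the vorticity. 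This should yield $\|(u_\nu,\tau_\nu)\|_{L^\infty_T H^\sigma}+\|\nabla\tau_\nu\|_{L^2_T H^\sigma}\le M(T)$ uniformly in $\nu\in(0,\nu_0(T)]$.

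For the difference estimate, subtracting \eqref{m2} from \eqref{m1} gives
\begin{align*}
&\partial_t U+u_\nu\cdot\nabla U+U\cdot\nabla u-\nu\Delta U+\nabla\Pi-\div T=\nu\Delta u,\\
&\partial_t T+u_\nu\cdot\nabla T+U\cdot\nabla\tau-\Delta T-D(U)=0,\qquad \div U=0,
\end{align*}
with $(U,T)|_{t=0}=0$ and $\Pi=\pi_\nu-\pi$. I would apply $\Lambda^s$ with $0\le s\le\sigma-2$ and pair with $(U,T)$ in $L^2$: the coupling $-\div T$, $-D(U)$ cancels after integration by parts, and Kato--Ponce/Moser estimates on $U\cdot\nabla u$, $U\cdot\nabla\tau$ and the transport terms should produce
\[
\frac{d}{dt}\bigl(\|U\|_{H^s}^2+\|T\|_{H^s}^2\bigr)+2\nu\|\nabla U\|_{H^s}^2+2\|\nabla T\|_{H^s}^2\le C A(t)\bigl(\|U\|_{H^s}^2+\|T\|_{H^s}^2\bigr)+C\nu^2\|u\|_{H^\sigma}^2,
\]
where $A(t)\lesssim 1+\|u_\nu\|_{H^\sigma}+\|u\|_{H^\sigma}+\|\tau\|_{H^\sigma}$ is integrable on $[0,T]$. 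The hypothesis $\sigma>4$ enters here so that $\nabla u$ and $\nabla\tau$ lie in $L^\infty\cap H^{\sigma-2}$, closing the Moser product estimates. Gronwall would then yield $\|(U,T)\|_{L^\infty_T H^s}^2\le C(T)\nu^2$, establishing the claimed convergence with explicit rate $O(\nu)$.

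The hardest step will be the uniform-in-$\nu$ global $H^\sigma$ bound. Because the viscosity on $u_\nu$ vanishes in the limit, the $H^\sigma$ argument cannot rely on $-\nu\Delta u_\nu$ and must instead draw all its control from the parabolic term $-\Delta\tau_\nu$, propagated through the cross-coupling $\div\tau/D(u)$. Once this bound is in hand, the difference estimate at the lower regularity level $H^{\sigma-2}$ is comparatively routine, and the two-dimensional structure underlying Theorem \ref{global} is what should make the inviscid convergence global in time without any restriction on $T$.
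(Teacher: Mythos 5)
Your overall architecture differs from the paper's in a way that matters: you first claim a $\nu$-uniform global $H^\sigma$ bound for the viscous system \eqref{m1} and then close the difference estimate by a \emph{linear} Gronwall argument (since you transport by $u_\nu$, the difference terms enter only linearly, but the coefficient $A(t)$ contains $\|u_\nu\|_{H^\sigma}$). The paper does the opposite: it never proves, nor uses, any uniform bound on $(u_\nu,\tau_\nu)$. It writes the difference system with the \emph{inviscid} solution $u$ as the transported background (so the only a priori input is Theorem \ref{global}), accepts the resulting quadratic term in the difference, and obtains the Riccati-type inequality $\frac{d}{dt}G\le \nu\|u\|_{H^{s+2}}+C_1M(t)G+C_2G^2$, which is closed by Constantin's Lemma 1.3 of \cite{constain}; this is exactly where the smallness $\nu\le\nu_0(T)$ enters. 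Global existence of $(u_\nu,\tau_\nu)$ in $H^\sigma$ on $[0,T]$ is then \emph{deduced} from the difference bound: since $s=\sigma-2>2$, the $H^s$ control of $(u_\nu,\tau_\nu)$ gives $\int_0^T\|(\nabla u_\nu,\nabla\tau_\nu)\|_{L^\infty}dt<\infty$, which is the continuation criterion for the local $H^\sigma$ solution. So in the paper the convergence estimate produces the global existence, not the other way around.

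The genuine gap in your proposal is Stage 1, which you yourself identify as the hardest step but then settle by assertion: ``the same energy argument applies \ldots the resulting bound is intrinsically $\nu$-independent.'' The Elgindi--Rousset proof of Theorem \ref{global} is not a plain $H^\sigma$ energy estimate; it exploits the specific coupling structure (combinations of the vorticity with Riesz transforms of $\tau$ obeying a better transport--diffusion equation), and the vorticity equation's source $\nabla^\perp\cdot\div\tau_\nu$ must be handled through that structure before any log-Sobolev/BKM-type argument for $\|\nabla u_\nu\|_{L^\infty}$ can be run. Once $-\nu\Delta u_\nu$ is added, the reformulated equation for the key quantity acquires terms of the form $\nu\Delta$ acting on the Riesz-transformed stress, and controlling these \emph{uniformly in} $\nu$ is precisely the nontrivial point; it does not follow from the sign of the viscous term. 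Since your Stage 2 Gronwall argument needs $\|u_\nu\|_{H^\sigma}\in L^1(0,T)$ uniformly in $\nu$, the whole proof stands or falls with this unproved claim. (Note also that the theorem only asserts existence for $\nu\le\nu_0(T)$; the paper's route explains this restriction, whereas your scheme, if Stage 1 were true as stated, would give a stronger statement for all $\nu>0$ --- a further sign that the uniform bound is not being obtained for free.) Either supply a genuine $\nu$-uniform adaptation of the Elgindi--Rousset argument, or restructure as the paper does: estimate the difference against the known inviscid solution and close the quadratic inequality via Constantin's lemma, letting the blow-up criterion yield global existence for small $\nu$.
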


\begin{remark}\label{shoulian}
From the proof of the above theorem, we can get the precise convergence rate  in the $H^s$ space. More precisely, we have
 $$\|(\u,\tau_\nu)-(u,\tau)\|_{H^s}\le C(T)\nu,$$
where $ C(T)$ is  a constant dependent on $T$ and $\|(u,\tau)\|_{L^\infty([0,T];H^{\sigma})}$.

\end{remark}


\noindent$\mathbf{Notations:}$ Let $A$, $B$ be two operators, we denote $[A, B] = AB - BA$, the commutator
between $A$ and $B$. For $a\lesssim b$, we mean that there is a uniform constant $C$, which may
be different on different lines, such that $a \le C b$. We shall denote by$\langle a,b\rangle$ the $L^2(\R^d)$ inner product of $a$ and $b$.
For $X$ a Banach space and $I$ an interval of $\mathbb{R}$, we denote by $C(I; X)$ the set of
continuous functions on $I$ with values in $X$. For $q \in [1, +\infty]$, the notation $L^q (I; X)$ stands for the set of measurable
functions on $I$ with values in $X$, such that $t \rightarrow \|f(t)\|_{ X }$ belongs to $L^q (I)$.
We always let $(d_j)_{j\in\mathbb{Z}}$ be a
generic element of $\ell^1(\mathbb{Z})$  so that $\sum_{j\in\mathbb{Z}}d_j=1$.

\section{ Preliminaries}
The result of the present paper rely on the use of  a
dyadic partition of unity with respect to the Fourier variables, the so-called the
\textit{Littlewood-Paley  decomposition}. Here,  we  only
 give an homogeneous Littlewood-Paley decomposition
$(\ddj)_{j\in\Z}$ that is a dyadic decomposition in the Fourier space for $\R^n.$
One may for instance set $\ddj\stackrel{\mathrm{def}}{=}\varphi(2^{-j}D)$, $\dot{S}_j\stackrel{\mathrm{def}}{=}\chi(2^{-j}D)$ with $\varphi(\xi)\stackrel{\mathrm{def}}{=}\chi(\f\xi2)-\chi(\xi),$
and $\chi$ a non-increasing nonnegative smooth function supported in $B(0,\f43),$ and with
value $1$ on $B(0,\f34)$  (see \cite{bcd}, Chap. 2 for more details).

The Littlewood-Paley decomposition is foundational to the definition of Besov spaces.
\begin{definition}\label{d:besov}
 For $s\in\R$, $1\le p\le\infty,$   the homogeneous Besov space $\dot B^s_{p,1}(\R^n)$ is  the
set of tempered distributions $f$ satisfying
$$\lim_{j\rightarrow-\infty}\|\dot S_jf\|_{L^\infty}=0,\quad \hbox{and}\quad
\|f\|_{\dot B^s_{p,1}}\triangleq\sum_{j\in\Z} 2^{js}
\|\dot{\Delta}_j  f\|_{L^p}<\infty.$$
\end{definition}

For any $f \in\mathcal{S}'(\R^n)$, the lower and higher oscillation parts can be expressed as
\begin{align}\label{gaodi}
f^\ell\stackrel{\mathrm{def}}{=}\sum_{j\leq N_0}\ddj f\quad\hbox{and}\quad
f^h\stackrel{\mathrm{def}}{=}\sum_{j>N_0}\ddj f
\end{align}
for a large  integer $N_0\ge 0.$  
The corresponding  truncated semi-norms are defined  as follows:
\begin{align*}\|f\|^{\ell}_{\dot B^{s}_{p,1}}\stackrel{\mathrm{def}}{=}  \|f^{\ell}\|_{\dot B^{s}_{p,1}}
\ \hbox{ and }\   \|f\|^{h}_{\dot B^{s}_{p,1}}\stackrel{\mathrm{def}}{=}  \|f^{h}\|_{\dot B^{s}_{p,1}}.
\end{align*}

As we shall work with time-dependent functions valued in Besov spaces,
we  introduce the norms:
$$
\|f\|_{L^q_T(\dot B^s_{p,1})}\stackrel{\mathrm{def}}{=}\bigl\| \|f(t,\cdot)\|_{\dot B^s_{p,1}}\bigr\|_{L^q(0,T)}.
$$
Moreover,
in the present paper, we frequently use the so-called ''time-space" Besov spaces
or Chemin-Lerner space first introduced by Chemin and Lerner \cite{bcd}.
\begin{definition}
Let $s\in\mathbb{R}$ and $0<T\leq +\infty$. We define
$$
\|f\|_{\widetilde{L}_{T}^{q}(\dot{B}_{p,1}^{s})}\stackrel{\mathrm{def}}{=}
\sum_{j\in\mathbb{Z}}
2^{js}\left(\int_{0}^{T}\|\dot{\Delta}_{j}f(t)\|_{L^p}^{q}dt\right)^{\frac{1}{q}}
$$
for  $ q,\,p \in [1,\infty)$ and with the standard modification for $p,\,q =\infty $.
\end{definition}
\begin{remark}
For any $1\le q\le \infty,$
from the Minkowski's inequality, on can deduce that
$$\|f\|_{L^q_{T}(\dot{B}_{p,1}^s)}\le\|f\|_{\widetilde{L}^q_{T}(\dot{B}_{p,1}^s)}.$$
\end{remark}

The following lemma describes the way derivatives act on spectrally localized functions.
\begin{lemma}\label{bernstein}
Let $\mathcal{B}$ be a ball and $\mathcal{C}$ a ring of $\mathbb{R}^n$. A constant $C$ exists so that for any positive real number $\lambda$, any
non-negative integer k, any smooth homogeneous function $\sigma$ of degree m, and any couple of real numbers $(p, q)$ with
$1\le p \le q$, there hold
\begin{align*}
&&\mathrm{Supp} \,\hat{u}\subset\lambda \mathcal{B}\Rightarrow\sup_{|\alpha|=k}\|\partial^{\alpha}u\|_{L^q}\le C^{k+1}\lambda^{k+n(\f 1p-\f 1q)}\|u\|_{L^p},\\
&&\mathrm{Supp} \,\hat{u}\subset\lambda \mathcal{C}\Rightarrow C^{-k-1}\lambda^k\|u\|_{L^p}\le\sup_{|\alpha|=k}\|\partial^{\alpha}u\|_{L^p}
\le C^{k+1}\lambda^{k}\|u\|_{L^p},\\
&&\mathrm{Supp} \,\hat{u}\subset\lambda \mathcal{C}\Rightarrow \|\sigma(D)u\|_{L^q}\le C_{\sigma,m}\lambda^{m+n(\f 1p-\f 1q)}\|u\|_{L^p}.
\end{align*}
\end{lemma}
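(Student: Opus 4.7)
The overall strategy is to reduce every estimate to the case $\lambda=1$ by the scaling $v(x):=u(x/\lambda)$, whose Fourier support lies in $\mathcal{B}$ (resp.\ $\mathcal{C}$) precisely when that of $u$ lies in $\lambda\mathcal{B}$ (resp.\ $\lambda\mathcal{C}$). Since $\|\partial^\alpha v\|_{L^q}=\lambda^{-|\alpha|+n/q}\|\partial^\alpha u\|_{L^q}$ and $\|v\|_{L^p}=\lambda^{n/p}\|u\|_{L^p}$, the factor $\lambda^{k+n(1/p-1/q)}$ in the first two inequalities pops out of the change of variables; in the third, the homogeneity identity $\sigma(\lambda\xi)=\lambda^{m}\sigma(\xi)$ is what produces the additional $\lambda^{m}$.

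For the upper bound in the first inequality, fix once and for all a radial $\phi\in\mathcal{S}(\R^n)$ whose Fourier transform is compactly supported and identically $1$ on a neighborhood of $\mathcal{B}$. Whenever $\widehat{v}$ is supported in $\mathcal{B}$ we then have the reproducing identity $v=\phi\ast v$, whence $\partial^{\alpha}v=(\partial^{\alpha}\phi)\ast v$, and Young's convolution inequality with $1+1/q=1/r+1/p$ yields $\|\partial^\alpha v\|_{L^q}\le \|\partial^\alpha\phi\|_{L^r}\|v\|_{L^p}$. To obtain the geometric $C^{k+1}$ dependence on $k$ I would not estimate $\|\partial^{\alpha}\phi\|_{L^r}$ directly but split the argument in two: use the single-derivative bound $\|\partial_{i}w\|_{L^p}\le C\|w\|_{L^p}$ iteratively $k$ times (which is legitimate because $\partial^{\alpha'}v$ remains spectrally supported in $\mathcal{B}$ for every $\alpha'$) and then apply one zero-derivative $L^{p}\!\to\!L^{q}$ step.

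For the reverse Bernstein inequality on the annulus, the key point is that $|\xi|$ stays bounded away from the origin on $\mathcal{C}$. Pick $\chi\in C_c^\infty(\R^n\setminus\{0\})$ with $\chi\equiv 1$ on $\mathcal{C}$, and use the polynomial identity $|\xi|^{2k}=\sum_{|\alpha|=k}\binom{k}{\alpha}\xi^{2\alpha}$ to write, for $\widehat{v}$ supported in $\mathcal{C}$,
$$\widehat{v}(\xi)=\chi(\xi)\widehat{v}(\xi)=\sum_{|\alpha|=k}\binom{k}{\alpha}\frac{\chi(\xi)\xi^{\alpha}}{|\xi|^{2k}}\cdot\xi^{\alpha}\widehat{v}(\xi).$$
Inverting the Fourier transform gives $v=\sum_{|\alpha|=k}c_\alpha\,g_\alpha\ast \partial^\alpha v$ with $g_\alpha:=\mathcal{F}^{-1}\bigl(\chi(\xi)\xi^{\alpha}/|\xi|^{2k}\bigr)\in \mathcal{S}(\R^n)$, the multiplier being in $C_c^\infty$ because $\chi$ is supported away from the origin. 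Young's inequality applied termwise produces the lower bound; the combinatorial weights $\binom{k}{\alpha}$ together with the number of multi-indices of length $k$ in dimension $n$ grow only geometrically in $k$, which accounts for the $C^{-k-1}$ constant.

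The third statement is a direct variant of the first: choose $\widetilde{\phi}\in C_c^\infty(\R^n\setminus\{0\})$ with $\widetilde{\phi}\equiv 1$ on $\mathcal{C}$, so that $\sigma\widetilde{\phi}\in C_c^\infty(\R^n)$ (the homogeneity of $\sigma$ is only invoked away from $0$, where $\widetilde{\phi}$ lives), and then use the scaled cutoff $\widetilde{\phi}(\cdot/\lambda)$ together with $\sigma(\lambda\xi)=\lambda^{m}\sigma(\xi)$ to express $\sigma(D)u$ as convolution of $u$ with the dilated Schwartz kernel $\lambda^{n}K(\lambda\cdot)$, where $K:=\mathcal{F}^{-1}(\sigma\widetilde{\phi})\in\mathcal{S}$; a final application of Young's inequality closes the $L^{p}\!\to\!L^{q}$ estimate with the correct power of $\lambda$. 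The one genuinely delicate point throughout is tracking the dependence of the constants on the differentiation order $k$: a straightforward estimate of $\|\partial^{\alpha}\phi\|_{L^r}$ would not obviously be geometric in $k$, which is precisely why the iteration-from-the-one-derivative-case step above is the cleanest route to the stated $C^{k+1}$.
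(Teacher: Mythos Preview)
The paper does not actually prove this lemma; it is stated without proof as a standard result (the Bernstein inequalities), with the implicit reference being the textbook of Bahouri--Chemin--Danchin cited as \cite{bcd}. Your proof sketch is correct and follows essentially the classical approach found in that reference: the scaling reduction to $\lambda=1$, the reproducing identity $v=\phi\ast v$ combined with Young's inequality and iteration of the one-derivative case for the upper bounds, and the multinomial decomposition of $|\xi|^{2k}$ together with an annular cutoff for the reverse inequality. The only place where your write-up is a bit thin is the claim that the constants in the reverse inequality grow at most geometrically in $k$: to make this fully rigorous you should note that on the fixed annulus $\mathrm{supp}\,\chi$ one has $|\xi^{\alpha}|/|\xi|^{2k}\le r^{-k}$ (with $r$ the inner radius), and that taking a \emph{fixed} number $N>n$ of derivatives of $\chi(\xi)\xi^{\alpha}/|\xi|^{2k}$ produces only polynomial-in-$k$ extra factors, so that $\|g_{\alpha}\|_{L^{1}}\le C^{k+1}$ uniformly in $\alpha$; combined with $\sum_{|\alpha|=k}\binom{k}{\alpha}=n^{k}$ this gives the stated $C^{-k-1}$.
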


Next we  recall a few nonlinear estimates in Besov spaces which may be
obtained by means of paradifferential calculus.
Here, we recall the decomposition in the homogeneous context:
\begin{align}\label{bony}
uv=\dot{T}_uv+\dot{T}_vu+\dot{R}(u,v),
\end{align}
\noindent$\mathrm{where}\quad \dot{T}_uv\triangleq\sum_{j\in Z}\dot{S}_{j-1}u\dot{\Delta}_jv, \hspace{0.5cm}\dot{R}(u,v)\triangleq\sum_{j\in Z}
\dot{\Delta}_ju\widetilde{\dot{\Delta}}_jv,\quad
\hbox{and}\quad
 \widetilde{\dot{\Delta}}_jv\triangleq\sum_{|j-j'|\le1}\dot{\Delta}_{j'}v .$

The paraproduct $\dot{T}$ and the remainder $\dot{R}$ operators satisfy the following
continuous properties.

\begin{lemma}[\cite{bcd}]\label{fangji}
For all $s\in\mathbb{R}$, $\sigma\ge0$, and $1\leq p, p_1, p_2\leq\infty,$ the
paraproduct $\dot T$ is a bilinear, continuous operator from $\dot{B}_{p_1,1}^{-\sigma}\times \dot{B}_{p_2,1}^s$ to
$\dot{B}_{p,1}^{s-\sigma}$ with $\frac{1}{p}=\frac{1}{p_1}+\frac{1}{p_2}$. The remainder $\dot R$ is bilinear continuous from
$\dot{B}_{p_1, 1}^{s_1}\times \dot{B}_{p_2,1}^{s_2}$ to $
\dot{B}_{p,1}^{s_1+s_2}$ with
$s_1+s_2>0$, and $\frac{1}{p}=\frac{1}{p_1}+\frac{1}{p_2}$.
\end{lemma}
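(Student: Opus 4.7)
The plan is to treat the paraproduct $\dot T$ and the remainder $\dot R$ separately, exploiting the different Fourier-support structures of their building blocks: $\dot S_{j-1}u\,\dot\Delta_j v$ is spectrally localized on an annulus of size $\sim 2^j$, while $\dot\Delta_j u\,\widetilde{\dot\Delta}_j v$ is spectrally localized on a ball of radius $\sim 2^j$. This dichotomy, combined with the Bernstein inequalities from Lemma \ref{bernstein} and the $\ell^1$ structure of the $\dot B^{s}_{p,1}$ norm, should yield both claims by essentially bookkeeping arguments.

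First I would handle the paraproduct. The annular localization implies $\dot\Delta_{j'}(\dot S_{j-1}u\,\dot\Delta_j v)=0$ unless $|j'-j|\le N$ for some fixed $N$. H\"older's inequality with $\tfrac1p=\tfrac1{p_1}+\tfrac1{p_2}$ gives $\|\dot S_{j-1}u\,\dot\Delta_j v\|_{L^p}\le\|\dot S_{j-1}u\|_{L^{p_1}}\|\dot\Delta_j v\|_{L^{p_2}}$. Writing $\dot S_{j-1}u=\sum_{k\le j-2}\dot\Delta_k u$ and using $\sigma\ge0$ so that $2^{k\sigma}\le 2^{j\sigma}$ for $k\le j-2$, I obtain
\begin{align*}
\|\dot S_{j-1}u\|_{L^{p_1}}\le\sum_{k\le j-2}2^{k\sigma}\bigl(2^{-k\sigma}\|\dot\Delta_k u\|_{L^{p_1}}\bigr)\le 2^{j\sigma}\|u\|_{\dot B^{-\sigma}_{p_1,1}}.
\end{align*}
Setting $c_j:=2^{js}\|\dot\Delta_j v\|_{L^{p_2}}$, an $\ell^1$ sequence of norm $\|v\|_{\dot B^s_{p_2,1}}$, and summing in $j'$ after multiplying by $2^{j'(s-\sigma)}$ gives the desired bound via a convolution of two $\ell^1$-type objects.

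For the remainder, the ball localization only forces $\dot\Delta_{j'}(\dot\Delta_j u\,\widetilde{\dot\Delta}_j v)=0$ when $j<j'-N$. H\"older yields $\|\dot\Delta_j u\,\widetilde{\dot\Delta}_j v\|_{L^p}\le\|\dot\Delta_j u\|_{L^{p_1}}\|\widetilde{\dot\Delta}_j v\|_{L^{p_2}}$. Introducing $a_j:=2^{js_1}\|\dot\Delta_j u\|_{L^{p_1}}$ and $b_j:=2^{js_2}\|\widetilde{\dot\Delta}_j v\|_{L^{p_2}}$, the Besov norm becomes
\begin{align*}
\|\dot R(u,v)\|_{\dot B^{s_1+s_2}_{p,1}}\lesssim\sum_{j'\in\Z}\sum_{j\ge j'-N}2^{(j'-j)(s_1+s_2)}a_j b_j.
\end{align*}
Here the hypothesis $s_1+s_2>0$ is decisive: since $j'-j\le N$, the geometric weight $2^{(j'-j)(s_1+s_2)}$ is summable in $j'$ at fixed $j$, and exchanging the order of summation bounds the whole expression by $(\sum_j a_j)(\sum_j b_j)\le \|u\|_{\dot B^{s_1}_{p_1,1}}\|v\|_{\dot B^{s_2}_{p_2,1}}$, after absorbing the factor of $N$ worth of shifted copies of $b_j$ into a constant.

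The main obstacle is really just the bookkeeping: isolating the distinct roles played by $\sigma\ge0$ in the paraproduct (used to dominate partial low-frequency sums by the highest allowed mode) and by $s_1+s_2>0$ in the remainder (used to convert a summation over a half-line of high frequencies into a convergent geometric series). I would also need to verify that the endpoint cases $p_i=\infty$ cause no trouble: since we do not increase integrability and H\"older is applied directly at the level of $L^p$ with $1/p=1/p_1+1/p_2$, no auxiliary Bernstein upgrade in $L^q$ is required, and the argument above goes through uniformly.
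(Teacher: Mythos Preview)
The paper does not supply its own proof of this lemma: it is stated with a citation to \cite{bcd} and used as a black box. Your proposal is essentially the standard argument from that reference (Theorems~2.47 and~2.52 in Bahouri--Chemin--Danchin), and it is correct. The only minor point worth tightening is in the remainder estimate: after swapping the order of summation you arrive at $\sum_j a_j b_j$ times a geometric constant, and you should state explicitly that $\sum_j a_j b_j\le(\sup_j a_j)\sum_j b_j\le(\sum_j a_j)(\sum_j b_j)$, rather than writing the product of $\ell^1$ norms as if it followed directly; this is trivial but your last sentence glosses over it.
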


Next, we give the important product acts on homogenous Besov spaces and composition estimate which will be also often used implicitly throughout the paper.
\begin{lemma}\label{daishu}
Let  $s_1\leq \frac{n}{2}$, $s_2\leq \frac n2$ and $s_1+s_2>0$. For any $u\in\dot{B}_{2,1}^{s_1}({\mathbb R} ^n)$, $v\in\dot{B}_{2,1}^{s_2}({\mathbb R} ^n)$, we have
\begin{align*}
\|uv\|_{\dot{B}_{2,1}^{s_1+s_2 -\frac{n}{2}}}\lesssim \|u\|_{\dot{B}_{2,1}^{s_1}}\|v\|_{\dot{B}_{2,1}^{s_2}}.
\end{align*}
\end{lemma}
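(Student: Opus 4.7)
The plan is to reduce the product estimate to the paraproduct/remainder decomposition \eqref{bony} already recorded in the excerpt, and then apply the continuity properties collected in Lemma \ref{fangji} together with an $L^1\hookrightarrow L^2$ Besov embedding on the remainder piece. Concretely, I write
\begin{equation*}
uv=\dot T_u v+\dot T_v u+\dot R(u,v),
\end{equation*}
and estimate the three terms separately, arranging the indices in each application of Lemma \ref{fangji} so that the output space is $\dot B_{2,1}^{s_1+s_2-n/2}$.

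For the first paraproduct $\dot T_u v$, note that the hypothesis $s_1\le n/2$ lets us set $\sigma\stackrel{\mathrm{def}}{=} n/2-s_1\ge 0$, and the scale-invariant embedding $\dot B_{2,1}^{s_1}\hookrightarrow \dot B_{\infty,1}^{-\sigma}$ (a direct consequence of Lemma \ref{bernstein} on dyadic blocks) gives $\|u\|_{\dot B_{\infty,1}^{-\sigma}}\lesssim\|u\|_{\dot B_{2,1}^{s_1}}$. Then Lemma \ref{fangji} applied with $(p_1,p_2,p)=(\infty,2,2)$ yields
\begin{equation*}
\|\dot T_u v\|_{\dot B_{2,1}^{s_2-\sigma}}\lesssim \|u\|_{\dot B_{\infty,1}^{-\sigma}}\|v\|_{\dot B_{2,1}^{s_2}}\lesssim \|u\|_{\dot B_{2,1}^{s_1}}\|v\|_{\dot B_{2,1}^{s_2}},
\end{equation*}
and $s_2-\sigma=s_1+s_2-n/2$ is exactly the target regularity. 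The symmetric term $\dot T_v u$ is controlled in the same way, swapping the roles of $u$ and $v$ and using $s_2\le n/2$.

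For the remainder $\dot R(u,v)$, I apply Lemma \ref{fangji} with $(p_1,p_2,p)=(2,2,1)$, which is legal precisely because the hypothesis $s_1+s_2>0$ is the one required by the lemma for the remainder. This produces
\begin{equation*}
\|\dot R(u,v)\|_{\dot B_{1,1}^{s_1+s_2}}\lesssim \|u\|_{\dot B_{2,1}^{s_1}}\|v\|_{\dot B_{2,1}^{s_2}}.
\end{equation*}
A final application of the scale-invariant embedding $\dot B_{1,1}^{s_1+s_2}\hookrightarrow \dot B_{2,1}^{s_1+s_2-n/2}$ (again a direct Bernstein-type consequence of Lemma \ref{bernstein}) lands this contribution in the desired space. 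Summing the three bounds produces the claimed inequality.

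I do not expect a serious obstacle: the only point to watch is that the paraproduct estimate in Lemma \ref{fangji} allows $\sigma=0$, so the endpoint cases $s_1=n/2$ or $s_2=n/2$ are covered without modification, and that the $s_1+s_2>0$ condition is used exclusively to control the remainder, which matches the hypothesis of the statement. If one preferred to avoid invoking the $L^1$-based embedding, an equivalent route is to handle $\dot R(u,v)$ directly at the dyadic level, bounding $\|\dot\Delta_j\dot R(u,v)\|_{L^2}$ by $\sum_{k\ge j-N}\|\dot\Delta_k u\|_{L^2}\|\widetilde{\dot\Delta}_k v\|_{L^\infty}$ via Lemma \ref{bernstein}, and then summing in $j$ against $2^{j(s_1+s_2-n/2)}$ through the generic summable sequence $(d_j)_{j\in\mathbb Z}$ introduced in the Notations; this is the standard workaround when one wants a self-contained argument, but it yields the same bound.
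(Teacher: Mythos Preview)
Your proof is correct. The paper does not actually supply a proof of Lemma~\ref{daishu}; it is stated without argument, immediately after Lemma~\ref{fangji}, with the clear intent that it follows from the paraproduct/remainder continuity properties just recorded. Your derivation via the Bony decomposition \eqref{bony}, together with Lemma~\ref{fangji} and the Bernstein embeddings $\dot B_{2,1}^{s_1}\hookrightarrow\dot B_{\infty,1}^{s_1-n/2}$ and $\dot B_{1,1}^{s_1+s_2}\hookrightarrow\dot B_{2,1}^{s_1+s_2-n/2}$, is exactly the standard route and the one the paper's setup is designed to support; there is nothing to compare against.
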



\begin{lemma}(Lemma 2.100 in \cite{bcd})\label{jiaohuanzi}
Let  $-1-\frac n2<s\leq 1+\frac n2$,
$v\in \dot{B}_{2,1}^{s}(\R^n)$ and $u\in \dot{B}_{2,1}^{\frac{n}{2}+1}(\R^n)$ with $\div u=0$.
Then there holds
$$
\big\|[\dot{\Delta}_j,u\cdot \nabla ]v\big\|_{L^2}\lesssim d_j 2^{-js}\|\nabla u\|_{\dot{B}_{2,1}^{\frac{n}{2}}}\|v\|_{\dot{B}_{2,1}^{s}}.
$$
\end{lemma}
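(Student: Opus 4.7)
The plan is to apply Bony's homogeneous paradifferential decomposition on both sides of the commutator and reduce to three standard pieces. With summation over $k$ implicit, write
$$u^k\partial_k v = \dot T_{u^k}\partial_k v + \dot T_{\partial_k v}u^k + \dot R(u^k,\partial_k v),$$
apply $\dot\Delta_j$, expand $u^k\partial_k\dot\Delta_j v$ the same way, and subtract. Noting that $\dot\Delta_j\dot T_{u^k}\partial_k v - \dot T_{u^k}\partial_k\dot\Delta_j v = [\dot\Delta_j,\dot T_{u^k}]\partial_k v$, one obtains
\begin{align*}
[\dot\Delta_j,u\cdot\nabla]v
&= \underbrace{[\dot\Delta_j,\dot T_{u^k}]\partial_k v}_{I_j}
+ \underbrace{\dot\Delta_j\dot T_{\partial_k v}u^k - \dot T_{\partial_k\dot\Delta_j v}u^k}_{II_j}\\
&\quad + \underbrace{\dot\Delta_j\dot R(u^k,\partial_k v) - \dot R(u^k,\partial_k\dot\Delta_j v)}_{III_j}.
\end{align*}

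For $I_j$ I would exploit the convolution representation $\dot\Delta_j f = 2^{jn}g(2^j\cdot)\ast f$ with $g\in\mathcal S$, and expand $\dot T_{u^k} = \sum_{j'}\dot S_{j'-1}u^k\,\dot\Delta_{j'}$ keeping only $|j'-j|\le 4$ by spectral localization. A first-order Taylor expansion of the kernel gains a factor $2^{-j}$ and replaces $\dot S_{j'-1}u^k$ with $\nabla \dot S_{j'-1}u^k$, yielding
$$\|I_j\|_{L^2}\lesssim\sum_{|j'-j|\le 4}2^{-j}\|\nabla\dot S_{j'-1}u\|_{L^\infty}\|\dot\Delta_{j'}\nabla v\|_{L^2},$$
which after the embedding $\dot{B}_{2,1}^{n/2}\hookrightarrow L^\infty$ delivers the desired $d_j 2^{-js}\|\nabla u\|_{\dot{B}_{2,1}^{n/2}}\|v\|_{\dot{B}_{2,1}^s}$ bound.

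For $II_j$, spectral supports force $j'\sim j$ in both summands, and a Bernstein-based $L^\infty\times L^2$ estimate closes the piece; the upper endpoint $s\le 1+\tfrac n2$ enters here through the need to embed $\partial_k v\in\dot{B}_{2,1}^{s-1}$ into an $L^\infty$-type slot of the paraproduct. For $III_j$, the divergence-free assumption is used in the essential way: since $\partial_k u^k = 0$,
$$\dot R(u^k,\partial_k w) = \partial_k\dot R(u^k,w),\qquad\text{hence}\qquad III_j = \partial_k\bigl[\dot\Delta_j\dot R(u^k,v) - \dot R(u^k,\dot\Delta_j v)\bigr].$$
The derivative now sits outside the remainder, so the continuity of $\dot R$ (Lemma \ref{fangji}) combined with the spectral localization to the high-frequency branch $j'\gtrsim j - O(1)$ supplies the necessary $2^{-j(s+1)}$ decay; the lower endpoint $s>-1-\tfrac n2$ comes exactly from the continuity threshold $s_1+s_2>0$ applied with $s_1 = 1+\tfrac n2$ and $s_2 = s$.

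The main obstacle will be the index bookkeeping for $III_j$: summing over the high-frequency branch demands a Young-type convolution step against an $\ell^1$ sequence from $u\in\dot{B}_{2,1}^{n/2+1}$ so as to extract a single $(d_j)\in\ell^1$ rather than leaving separate summable factors from $u$ and $v$. Without the divergence-free cancellation one loses a full derivative and can only reach $s\le\tfrac n2$, so the algebraic identity $\dot R(u^k,\partial_k w) = \partial_k\dot R(u^k,w)$ is precisely what extends the admissible range up to the stated endpoint $s = 1+\tfrac n2$.
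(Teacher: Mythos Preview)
The paper does not supply its own proof of this lemma: it is quoted verbatim as Lemma~2.100 of Bahouri--Chemin--Danchin and then used as a black box. Your sketch is precisely the standard paraproduct argument that the cited reference carries out, so in substance you are reproducing the source rather than diverging from the paper.

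Two small points on the details. First, in $II_j$ the second summand $\dot T_{\partial_k\dot\Delta_j v}u^k=\sum_{j'}\dot S_{j'-1}(\partial_k\dot\Delta_j v)\dot\Delta_{j'}u^k$ is \emph{not} localized to $|j'-j|\le4$: since $\dot S_{j'-1}\dot\Delta_j\neq0$ only for $j'\gtrsim j$, the sum actually runs over $j'\ge j+O(1)$. The estimate still closes because the tail $\sum_{j'\ge j}\|\dot\Delta_{j'}u\|_{L^2}\lesssim 2^{-j(n/2+1)}\|\nabla u\|_{\dot B^{n/2}_{2,1}}$ is summable, but your description of the spectral restriction is inaccurate as written. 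Second, your identification of where the endpoints enter is correct: the constraint $s\le 1+\tfrac n2$ comes from needing $\|\dot S_{j'-1}\nabla v\|_{L^\infty}$ controlled in the $II_j$ piece, and the divergence-free rewriting $\dot R(u^k,\partial_k v)=\partial_k\dot R(u^k,v)$ is exactly what pushes the lower threshold down to $s>-1-\tfrac n2$ in the remainder analysis.
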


\begin{remark}\label{com}
Let $A(D)$ be a zero-order Fourier multiplier.
For any $0\le \alpha<n+2,$ from the above lemma, we can easily get
\begin{align*}
\big\|[\dot{\Delta}_jA(D), u\cdot \nabla ]v\big\|_{L^2}\lesssim d_j 2^{-({\frac{n}{2}+1-\alpha})j}\|\nabla u\|_{\dot{B}_{2,1}^{\frac{n}{2}}}\|v\|_{\dot{B}_{2,1}^{\frac{n}{2}+1-\alpha}}.
\end{align*}
\end{remark}

Finally, we present a new commutator estimate in the Besov spaces which can be regarded as a partial generalization of the commutator estimate of Kato and Ponce \cite{ponce}.
In fact, it was proved in Kato and Ponce \cite{ponce},  for $s\ge0 $ and $1<p<\infty,$ that,
\begin{align}\label{kato}
\big\| J^{s} (u\cdot\nabla{v})-u\cdot\nabla J^{s}{v}\big\|_{L^p}\le C(\|\nabla u\|_{L^\infty}\|J^{s-1}\nabla v\|_{L^p}+\|\nabla v\|_{L^\infty}\|J^{s}u\|_{L^p}),
\end{align}
with $J^{s}f\stackrel{\mathrm{def}}{=}\mathcal{F}^{-1}[(1+|\xi|^2)^{\frac{s}{2}}\hat{f}(\xi))]$.

Especially,  let $p=2$ and $s>\frac n2$ in \eqref{kato}, one can get
\begin{align}\label{kato2}
\big\| J^{s} (u\cdot\nabla{v})-u\cdot\nabla J^{s}{v}\big\|_{L^2}\le C(\|\nabla u\|_{H^s}\| v\|_{H^s}+\|u\|_{H^s}\|\nabla v\|_{H^s}).
\end{align}
The estimate \eqref{kato2} was further improved by Fefferman {\it et al.} in \cite{Fefferman} to
\begin{align}\label{kato3}
\big\| \Lambda^{s} (u\cdot\nabla{v})-u\cdot\nabla\Lambda^{s}{v}\big\|_{L^2}\le C\|\nabla u\|_{H^s}\| v\|_{H^s},\quad s>\frac n2.
\end{align}
Moreover, they exhibited a counterexample in Appendix A in \cite{Fefferman} to show that the commutator estimate \eqref{kato3} does not hold in the case $s=\frac n2$, at least for $n=2$, even if $u$ and $v$ are required to be divergence-free.
In the Sobolev spaces, the estimate \eqref{kato3} seems optimal as the relation $H^{\frac n2}(\R^n)\hookrightarrow L^{\infty}(\R^n)$ is not valid.
However, if we consider the commutator estimate in the Besov spaces, we can verify that the estimate is still valid for  any dimension $n\ge 2$ if $s=\frac n2$ and $\div u = 0$.
More precisely, we shall prove  the following commutator estimate:
\begin{lemma}\label{xin}
Let $ n\geq2$,  $\Lambda\stackrel{\mathrm{def}}{=}\sqrt{-\Delta}$ and $-1\le s<n+1$.  For any   ${v}\in\dot{B}^{\frac{n}{2}}_{2,1}(\R^n)$, $\nabla u\in \dot{B}^{\f n2}_{2,1}(\R^n)$ with $\div u = 0$,  there exists a constant $C$ such that
\begin{align}\label{xin2}
\big\| \Lambda^{s} (u\cdot\nabla{v})-u\cdot\nabla\Lambda^{s}{v}\big\|_{\dot{B}^{\f n2-s}_{2,1}}\le C \big\|\nabla u\big\|_{\dot{B}^{\f n2}_{2,1}}\big\|{v}\big\|_{\dot{B}^{\f n2}_{2,1}}.
\end{align}
\end{lemma}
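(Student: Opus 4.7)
The core strategy is to exploit the divergence-free condition $\div u=0$ to rewrite the expression as a single $\partial_k$-derivative of a genuine commutator:
\begin{equation*}
\Lambda^{s}(u\cdot\nabla v)-u\cdot\nabla\Lambda^{s}v \;=\; \sum_k\partial_k\bigl([\Lambda^s,u^k]v\bigr).
\end{equation*}
Thus the task reduces to bounding the commutator $[\Lambda^s,u^k]v$ in $\dot{B}^{\f n2-s+1}_{2,1}$, and the $\partial_k$ will drop the regularity by one to give the desired $\dot{B}^{\f n2-s}_{2,1}$ estimate. The advantage of this reformulation is that we avoid having to place $\nabla v$ (which only lives in $\dot B^{n/2-1}_{2,1}\not\hookrightarrow L^\infty$) in an $L^\infty$ norm; instead the full derivative always falls on $u$, whose Lipschitz norm is controlled by $\|\nabla u\|_{\dot B^{n/2}_{2,1}}$.

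Next I apply Bony's decomposition $u^k v = \dot T_{u^k}v+\dot T_v u^k+\dot R(u^k,v)$ to split the commutator into three pieces:
\begin{equation*}
[\Lambda^s,u^k]v \;=\; \underbrace{[\Lambda^s,\dot T_{u^k}]v}_{I}\;+\;\underbrace{\Lambda^s\dot T_v u^k-\dot T_{\Lambda^s v}u^k}_{II}\;+\;\underbrace{\Lambda^s\dot R(u^k,v)-\dot R(u^k,\Lambda^s v)}_{III}.
\end{equation*}
For $I$, each dyadic piece $[\Lambda^s,\dot S_{j-1}u^k]\dot\Delta_j v$ is spectrally localized in an annulus of size $2^j$, and the classical first-order Taylor expansion of the symbol $|\xi|^s$ around a high frequency yields $\|[\Lambda^s,\dot S_{j-1}u^k]\dot\Delta_j v\|_{L^2}\lesssim 2^{(s-1)j}\|\nabla u\|_{L^\infty}\|\dot\Delta_j v\|_{L^2}$, which after multiplying by $2^{j(n/2-s+1)}$ and summing gives the desired bound using $\|\nabla u\|_{L^\infty}\lesssim \|\nabla u\|_{\dot B^{n/2}_{2,1}}$. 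For $II$, each summand with index $j'$ is also spectrally localized in an annulus $\sim 2^{j'}$, so I bound the two terms $\Lambda^s(\dot S_{j'-1}v\cdot\dot\Delta_{j'}u^k)$ and $(\Lambda^s\dot S_{j'-1}v)\dot\Delta_{j'}u^k$ separately by Bernstein, using $\|\dot S_{j'-1}v\|_{L^\infty}\lesssim \|v\|_{\dot B^{n/2}_{2,1}}$, to get $\lesssim 2^{sj'}\|v\|_{\dot B^{n/2}_{2,1}}\|\dot\Delta_{j'}u^k\|_{L^2}$; weighted summation produces $\|v\|_{\dot B^{n/2}_{2,1}}\|u^k\|_{\dot B^{n/2+1}_{2,1}}\sim \|v\|_{\dot B^{n/2}_{2,1}}\|\nabla u\|_{\dot B^{n/2}_{2,1}}$.

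The main obstacle is the remainder $III$. Here the product $\dot\Delta_{j'}u^k\cdot\widetilde{\dot\Delta}_{j'}v$ has Fourier support in a \emph{ball} of radius $\sim 2^{j'+2}$ rather than an annulus, so the outer block $\dot\Delta_j$ is only constrained by $j\le j'+C$, and a naive $L^2$ estimate gives a divergent geometric sum for large $s$. The key trick is to use the $L^1\!\to\!L^2$ Bernstein inequality on the outer block, namely
\begin{equation*}
\|\dot\Delta_j(\dot\Delta_{j'}u^k\cdot \Lambda^s\widetilde{\dot\Delta}_{j'}v)\|_{L^2}\lesssim 2^{nj/2}\|\dot\Delta_{j'}u^k\|_{L^2}\|\Lambda^s\widetilde{\dot\Delta}_{j'}v\|_{L^2}\lesssim 2^{nj/2+sj'}\|\dot\Delta_{j'}u^k\|_{L^2}\|\dot\Delta_{j'}v\|_{L^2},
\end{equation*}
and similarly (without $\Lambda^s$ inside) for the first term of $III$. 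Plugging in $\|\dot\Delta_{j'}u^k\|_{L^2}\lesssim d_{j'}2^{-j'(n/2+1)}\|\nabla u\|_{\dot B^{n/2}_{2,1}}$ and $\|\dot\Delta_{j'}v\|_{L^2}\lesssim d_{j'}2^{-nj'/2}\|v\|_{\dot B^{n/2}_{2,1}}$ and multiplying by the target weight $2^{j(n/2-s+1)}$ produces geometric factors of the form $2^{(j-j')(n-s+1)}$, which are summable in $j\le j'+C$ precisely when $s<n+1$; the complementary term gives $2^{(j-j')(n+1)}$ which is always summable.

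Putting the three pieces together yields $\|[\Lambda^s,u^k]v\|_{\dot B^{n/2-s+1}_{2,1}}\lesssim \|\nabla u\|_{\dot B^{n/2}_{2,1}}\|v\|_{\dot B^{n/2}_{2,1}}$, and then applying $\partial_k$ produces the claimed estimate in $\dot B^{n/2-s}_{2,1}$. The technical work is thus concentrated in the remainder term $III$: the $L^1\!\to\!L^2$ Bernstein step is what pushes the admissible range up to $s<n+1$ and what makes the estimate go through at the critical index $s=n/2$ where the Sobolev-based analogue \eqref{kato3} fails; the lower bound $s\ge -1$ is a technical artifact of preserving summability of low frequencies throughout these estimates.
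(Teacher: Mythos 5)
Your reduction via $\div u=0$ to $\Lambda^{s}(u\cdot\nabla v)-u\cdot\nabla\Lambda^{s}v=\sum_k\partial_k([\Lambda^s,u^k]v)$, followed by Bony splitting of $[\Lambda^s,u^k]v$ and the $L^1\!\to\!L^2$ Bernstein treatment of the remainder, is a genuinely different route from the paper's: the paper never detaches $\Lambda^s$ from a dyadic block, writing $\dot{\Delta}_j([\Lambda^s,u\cdot\nabla]v)=[\dot{\Delta}_j\Lambda^s,u\cdot\nabla]v-[\dot{\Delta}_j,u\cdot\nabla]\Lambda^sv$, disposing of the second piece with the known commutator estimate (Lemma \ref{jiaohuanzi}, applied to $\Lambda^sv\in\dot{B}^{\frac n2-s}_{2,1}$, which is where both ends of the range $-1\le s<n+1$ enter), and handling the first by Bony plus the same kernel/mean-value argument you use for your term $I$. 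Your terms $I$ and $III$ are correct, and your remainder computation does locate the constraint $s<n+1$ in an honest way.

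There is, however, a genuine gap on the lower part of the stated range: for $-1\le s<0$ your argument fails, and not merely technically, because your intermediate claim $\|[\Lambda^s,u^k]v\|_{\dot{B}^{\frac n2+1-s}_{2,1}}\lesssim\|\nabla u\|_{\dot{B}^{\frac n2}_{2,1}}\|v\|_{\dot{B}^{\frac n2}_{2,1}}$ is false when $s<0$. Concretely, in term $II$ you invoke $\|\dot{S}_{j'-1}\Lambda^sv\|_{L^\infty}\lesssim 2^{sj'}\|v\|_{\dot{B}^{\frac n2}_{2,1}}$, which holds only for $s\ge0$: for $s<0$ the sum $\sum_{k\le j'-2}2^{sk}\|\dot{\Delta}_kv\|_{L^\infty}$ is dominated by the low frequencies of $v$ and need not even be finite for $v\in\dot{B}^{\frac n2}_{2,1}$. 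No rearrangement of the three Bony pieces rescues this: taking $v$ spectrally concentrated at frequency $2^{-N}$ (normalized in $\dot{B}^{\frac n2}_{2,1}$) and $u$ at frequency $1$, the piece $u^k\Lambda^sv$ has size of order $2^{-Ns}\to\infty$ while $\Lambda^s(u^kv)$ stays bounded, so $[\Lambda^s,u^k]v$ itself leaves the target space. The full lemma survives for $s\in[-1,0)$ only because the outer $\partial_k$ recombines with $\Lambda^s$ into an operator of nonnegative order $s+1$; the paper's arrangement keeps this structure intact (its $\Lambda^s$ always acts together with $\dot{\Delta}_j$ or on the frequency-localized $\Lambda^sv$ inside Lemma \ref{jiaohuanzi}), whereas your reduction destroys it before estimating. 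Note also that, contrary to your closing remark, the restriction $s\ge-1$ is never actually used in your three estimates, which signals that the negative range is not reached. Your proof therefore covers $0\le s<n+1$ — enough for the application in this paper, where $s=\alpha-1\in(0,1]$ — but to obtain the lemma as stated you must either keep $\Lambda^s$ attached to the dyadic blocks as the paper does, or treat the low-frequency part of $v$ separately before undoing the divergence form.
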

\begin{proof}
Firstly, according to the definition of the commutator, one can write
\begin{align}\label{}
\ddj([ \Lambda^{s}, u\cdot\nabla]{v})=[\ddj \Lambda^{s}, u\cdot\nabla]{v}-[\ddj , u\cdot\nabla]\Lambda^{s}{v}.
\end{align}
Applying Lemma \ref {jiaohuanzi},
  we can get the estimate of the  second term of the right-hand side of the above equation:
\begin{align}\label{yuyi10}
\big\|[\dot{\Delta}_j, u\cdot \nabla ]\Lambda^{s}{v}\big\|_{L^2}\lesssim d_j 2^{-({\frac{n}{2}-s})j}\|\nabla u\|_{\dot{B}_{2,1}^{\frac{n}{2}}}\|{v}\|_{\dot{B}_{2,1}^{\frac{n}{2}}},\quad -1\le s<n+1.
\end{align}

In the following, we main concern with the term about $[\ddj \Lambda^{s}, u\cdot\nabla]{v}$. With the aid of
the notion of para-products \eqref{bony}, we can easily write
\begin{align*}
[\ddj \Lambda^{s}, u\cdot\nabla]{v}=I_j^1+I_j^2+I_j^3,
\end{align*}%
where
\begin{align*}
&I_j^1=\sum_{|k-j|\leq4}[\dot{\Delta}_j\Lambda^s, \dot{S}_{k-1}u\cdot\nabla ] \dot{\Delta}_kv,\\
&I_j^2=\sum_{|k-j|\leq4}\dot{\Delta}_j\Lambda^s (\dot{\Delta}_ku\cdot\nabla \dot{S}_{k-1}v)-\sum_{k\geq j+1} \dot{\Delta}_ku \cdot\nabla \dot{S}_{k-1}\dot{\Delta}_j\Lambda^sv,\\
&I_j^3=\sum_{k\geq j-3}\dot{\Delta}_j\Lambda^s (\dot{\Delta}_ku\cdot\nabla \widetilde{\dot{\Delta}}_kv)-\sum_{|k-j|\leq2} \dot{\Delta}_ku\cdot\nabla \widetilde{\dot{\Delta}}_k\dot{\Delta}_j\Lambda^sv.
\end{align*}
with
$\widetilde{\dot{\Delta}_k}=\dot{\Delta}_{k-1}+\dot{\Delta}_k+\dot{\Delta}_{k+1}.$

Since
$$\widehat{\dot{\Delta}_j\Lambda^{s} f}=\varphi(2^{-j}\xi){|\xi|^{s}}\widehat{f},$$
one has
$$\dot{\Delta}_j\Lambda^{s} f=2^{j(n+s)}\mathfrak{h}(2^j\cdot)\star f,\ \ \rm {for \ some} \ \mathfrak{h}\in \mathcal{S}.$$
From the definition of Bony's decomposition and the mean value theorem, one can write $I_j^1$ into
\begin{align*}
I_j^1=&\sum_{|k-j|\le4}[\dot{\Delta}_j\Lambda^{s},\dot{S}_{k-1}{{u}_m} ]\partial_m\dot{\Delta}_{k}v\nonumber\\
=&2^{j(n+s)}\sum_{|k-j|\leq4}\int_{\R^n}h(2^jy)
(\dot{S}_{k-1}{u}_m(x-y)-\dot{S}_{k-1}{u}_m(x))\partial_m\dot{\Delta}_{k}v(x-y)dy
\nonumber\\=&-2^{j(n+s)}\sum_{|k-j|\leq4}\int_{\R^n}h(2^jy)
\Big(\int_0^1y\cdot\nabla \dot{S}_{k-1}{u}_m(x-\tau y)\,d\tau\Big) \partial_m\dot{\Delta}_{k}v(x-y)dy,
\end{align*}
from which we  can deduce that
\begin{align}\label{cc}
\sum_{j\in\mathbb{Z}}2^{j(\frac{n}{2}-s)}\|I_j^1\|_{L^2}\le& C\sum_{j\in\mathbb{Z}}2^{j(\frac{n}{2}-1)}\sum_{|k-j|\le4}\|\nabla \dot{S}_{k-1}u\|_{L^\infty}\|\nabla \dot{\Delta}_k{v}\|_{L^2}\nonumber\\
\le& C\|\nabla u\|_{L^\infty}\sum_{j\in\mathbb{Z}}2^{\frac{n}{2}j}\sum_{|k-j|\le4}\|\dot{\Delta}_k {v}\|_{L^2}
\le C\|\nabla u\|_{L^\infty}\|{v}\|_{\dot{B}^{\frac{n}{2}}_{2,1}}.
\end{align}
For the last two terms, we get by H\"{o}lder's inequality and Young's inequality that
\begin{align}\label{aa}
\sum_{j\in\mathbb{Z}}2^{j(\frac{n}{2}-s)}\|I_j^2\|_{L^2}\leq& C \sum_{j\in\mathbb{Z}}2^{j(\frac{n}{2}-s)}\sum_{|k-j|\leq4}2^{j(s+1)} \|\dot{S}_{k-1}v\|_{L^\infty}\|\dot{\Delta}_ku\|_{L^2}\nonumber\\
&\ \ +C\sum_{j\in\mathbb{Z}}2^{j(\frac{n}{2}-s)}\sum_{k\geq j+1}2^{j(s+1)} \|\dot{\Delta}_ku\|_{L^2}\|\dot{\Delta}_jv\|_{L^\infty}\nonumber\\
\leq& C\|v\|_{\dot{B}^{\frac{n}{2}}_{2,1}}\sum_{j\in\mathbb{Z}}2^{j(\frac{n}{2}+1)}
(\sum_{|k-j|\leq4}\|\dot{\Delta}_ku\|_{L^2}+\sum_{k\geq j+1} \|\dot{\Delta}_ku\|_{L^2})\nonumber\\
\leq &C\|u\|_{\dot{B}^{\frac{n}{2}+1}_{2,1}}\|v\|_{\dot{B}^{\frac{n}{2}}_{2,1}}.
\end{align}
Similarly,
\begin{align}\label{bb}
\sum_{j\in\mathbb{Z}}2^{j(\frac{n}{2}-s)}\|I_j^3\|_{L^2}\leq& C \sum_{j\in\mathbb{Z}}2^{j(\frac{n}{2}-s)}\sum_{k\geq j-3}2^{j(s+1)} \|\dot{\Delta}_ku\|_{L^2}\|\widetilde{\dot{\Delta}}_kv\|_{L^\infty}\nonumber \\
&\ \ +C\sum_{j\in\mathbb{Z}}2^{j(\frac{n}{2}-s)}\sum_{|k-j|\leq2}2^{j(s+1)} \|\dot{\Delta}_ku\|_{L^2}\|\dot{\Delta}_jv\|_{L^\infty}\nonumber\\
\leq & C\|v\|_{\dot{B}^{\frac{n}{2}}_{2,1}}\sum_{j\in\mathbb{Z}}2^{j(\frac{n}{2}+1)}
(\sum_{k\geq j-3}\|\dot{\Delta}_ku\|_{L^2}+\sum_{|k-j|\leq2} \|\dot{\Delta}_ku\|_{L^2})\nonumber\\
\leq &C\|u\|_{\dot{B}^{\frac{n}{2}+1}_{2,1}}\|v\|_{\dot{B}^{\frac{n}{2}}_{2,1}}.
\end{align}
Adding up \eqref{cc}, \eqref{aa}, and \eqref{bb}, we arrive at
\begin{align}\label{yuyi20}
\sum_{j\in\mathbb{Z}}2^{j(\frac{n}{2}-s)}[\ddj \Lambda^{s}, u\cdot\nabla]{v}\lesssim \|\nabla u\|_{\dot{B}_{2,1}^{\frac{n}{2}}}\|{v}\|_{\dot{B}_{2,1}^{\frac{n}{2}}}.
\end{align}
Combining \eqref{yuyi10} and \eqref{yuyi20}, we can  complete the proof of this lemma.
\end{proof}


\section{ The proof of the Theorem \ref{th1}}
Now, we begin to prove the global small solutions of \eqref{m}.
We first denote the initial  energy
\begin{align*}
\mathcal E_0(0)\stackrel{\mathrm{def}}{=}\|(u_0,\tau_0)\|_{\dot{B}_{2,1}^{\frac n2+1-\alpha}}+\|\tau_0\|_{\dot{B}_{2,1}^{\frac {n}{2}}\cap\dot{B}_{2,1}^{\frac n2+1-\alpha}},
\end{align*}
 and  then  define the total energy
\begin{align*}
\mathcal E(t)\stackrel{\mathrm{def}}{=}\mathcal E_1(t)+\mathcal E_2(t),
\end{align*}
\begin{align*}
\mathrm{with}\quad\quad \quad\quad \mathcal E_1(t)\stackrel{\mathrm{def}}{=}&\|(u,\tau)\|_{L^\infty_t(\dot{B}_{2,1}^{\frac n2+1-\alpha})}+\|\tau\|_{L^\infty_t(\dot{B}_{2,1}^{\frac {n}{2}})},\nonumber\\
\mathcal E_2(t)\stackrel{\mathrm{def}}{=}&\|u\|_{L^1_t(\dot{B}_{2,1}^{\frac {n}{2}+1})}
+\|(\Lambda^{-1}\p\div\tau)^\ell\|_{L^1_t(\dot{B}_{2,1}^{\frac {n}{2}+1})}+\|(\Lambda^{-1}\p\div\tau)^h\|_{L^1_t(\dot{B}_{2,1}^{\frac {n}{2}+2-\alpha})}.
\end{align*}
We shall derive the a priori estimates of $\mathcal E_1(t)$ and $\mathcal E_2(t)$ separatively.
\subsection{The estimates of $\mathcal E_1(t)$}

Applying   $\dot{\Delta}_j\p$ to the first equation and  $\dot{\Delta}_j$ to the second equation in  \eqref{m}, respectively, we discover that
\begin{eqnarray}\label{caihong1}
\left\{\begin{aligned}
&\partial_t \dot{\Delta}_ju+ u\cdot\nabla \dot{\Delta}_ju+ \dot{\Delta}_j\Lambda^\alpha u-\dot{\Delta}_j\p\div \tau=[u\cdot\nabla, \dot{\Delta}_j\p]u,\\
&\partial_t \dot{\Delta}_j\tau+ u\cdot \nabla \dot{\Delta}_j\tau   +\dot{\Delta}_j Q(\tau, \nabla u)  -  \dot{\Delta}_jD (u)=[u\cdot\nabla, \dot{\Delta}_j]\tau.
\end{aligned}\right.
\end{eqnarray}
Testing the first equation  and  second equation in  \eqref{caihong1} by  $\dot{\Delta}_ju$ and $\dot{\Delta}_j\tau$, respectively, yields
\begin{align}\label{caihong3}
\frac{1}{2}\frac{d}{dt}\|\dot{\Delta}_ju\|_{L^2}^2+\| \dot{\Delta}_j\Lambda^{\frac{\alpha}{2}}u\|_{L^2}^2=\langle\dot{\Delta}_j\p\div \tau, \dot{\Delta}_ju\rangle+\langle[u\cdot\nabla, \dot{\Delta}_j\p] u,\dot{\Delta}_j u\rangle,
\end{align}
and
\begin{align}\label{caihong5}
\frac{1}{2}\frac{d}{dt}\|\dot{\Delta}_j\tau\|_{L^2}^2=\langle \dot{\Delta}_jD (u), \dot{\Delta}_j\tau\rangle+\langle[u\cdot\nabla, \dot{\Delta}_j] \tau,\dot{\Delta}_j \tau\rangle
-\langle\dot{\Delta}_j Q(\tau, \nabla u),\dot{\Delta}_j\tau\rangle,
\end{align}
in which we have used the following fact:
 $$\langle u\cdot\nabla \dot{\Delta}_ju,\dot{\Delta}_ju\rangle=0,\quad
\langle u\cdot\nabla \dot{\Delta}_j\tau,\dot{\Delta}_j\tau\rangle=0.$$

Summing up  \eqref{caihong3} and \eqref{caihong5} and using the cancellation relation
$$\langle\dot{\Delta}_j\p\div \tau, \dot{\Delta}_ju\rangle+\langle \dot{\Delta}_jD (u), \dot{\Delta}_j\tau\rangle=0,$$
we  can get
\begin{align}\label{caihong4}
&\frac{1}{2}\frac{d}{dt}\|(\dot{\Delta}_ju,\dot{\Delta}_j\tau)\|_{L^2}^2+\| \dot{\Delta}_j\Lambda^{\frac{\alpha}{2}}u\|_{L^2}^2\nonumber\\
&\quad=\langle[u\cdot\nabla, \dot{\Delta}_j\p] u,\dot{\Delta}_j u\rangle+\langle[u\cdot\nabla, \dot{\Delta}_j] \tau,\dot{\Delta}_j \tau\rangle
+\langle\dot{\Delta}_j Q(\tau, \nabla u),\dot{\Delta}_j\tau\rangle,
\end{align}
which implies
\begin{align}\label{caihong6}
&\frac{1}{2}\frac{d}{dt}\|(\dot{\Delta}_ju,\dot{\Delta}_j\tau)\|_{L^2}^2
\lesssim \left|\langle[u\cdot\nabla, \dot{\Delta}_j\p] u,\dot{\Delta}_j u\rangle\right|+\left|\langle[u\cdot\nabla, \dot{\Delta}_j] \tau,\dot{\Delta}_j \tau\rangle\right|
+\left|\langle\dot{\Delta}_j Q(\tau, \nabla u),\dot{\Delta}_j\tau\rangle\right|.
\end{align}

Hence dividing  (formally) \eqref{caihong6} by $\|(\dot{\Delta}_ju,\dot{\Delta}_j\tau)\|_{L_2}$, multiplying by $2^{(\frac n2+1-\alpha)j}$, integrating \eqref{caihong6},   and summing over $j$, we obtain
\begin{align}\label{caihong7}
\|(u,\tau)\|_{\widetilde{L}_t^{\infty}(\dot{B}_{2,1}^{\frac n2+1-\alpha})}
\lesssim&\|(u_0,\tau_0)\|_{\dot{B}_{2,1}^{\frac n2+1-\alpha}}
+{\sum_{j\in\Z}2^{({\frac n2+1-\alpha})j}\|\dot{\Delta}_j Q(\tau, \nabla u)\|_{L^1_t(L^2)}}\nonumber\\
&+\sum_{j\in\Z}2^{({\frac n2+1-\alpha})j}(\|[u\cdot\nabla, \dot{\Delta}_j\p] u\|_{L^1_t(L^2)}+
\|[u\cdot\nabla, \dot{\Delta}_j] \tau\|_{L^1_t(L^2)}).
\end{align}
Applying Lemma \ref{daishu} gives
\begin{align}\label{caihong8}
{\sum_{j\in\Z}2^{({\frac n2+1-\alpha})j}\|\dot{\Delta}_j Q(\tau, \nabla u)\|_{L^1_t(L^2)}}\lesssim&\int^t_0\|\nabla u\|_{\dot{B}_{2,1}^{\frac {n}{2}}}\|\tau\|_{\dot{B}_{2,1}^{\frac n2+1-\alpha}}ds.
\end{align}
Next we see, thanks to Lemma \ref{jiaohuanzi} that
\begin{align}\label{nian6}
\sum_{j\in\Z}2^{({\frac n2+1-\alpha})j}(\|[u\cdot\nabla, \dot{\Delta}_j\p] u\|_{L^1_t(L^2)}+
\|[u\cdot\nabla, \dot{\Delta}_j] \tau\|_{L^1_t(L^2)})
\lesssim&\int^t_0\|\nabla u\|_{\dot{B}_{2,1}^{\frac {n}{2}}}\|\tau\|_{\dot{B}_{2,1}^{\frac n2+1-\alpha}}ds.
\end{align}

Taking estimates \eqref{caihong8} and \eqref{nian6} into \eqref{caihong7} gives
\begin{align}\label{nian8}
\|(u,\tau)\|_{\widetilde{L}_t^{\infty}(\dot{B}_{2,1}^{\frac n2+1-\alpha})}
\lesssim&\|(u_0,\tau_0)\|_{\dot{B}_{2,1}^{\frac n2+1-\alpha}}
+
\int^t_0\|\nabla u\|_{\dot{B}_{2,1}^{\frac {n}{2}}}\|(u,\tau)\|_{\dot{B}_{2,1}^{\frac n2+1-\alpha}}ds
.
\end{align}
From the first equation in \eqref{m}, we can get similarly to estimate \eqref{caihong7} that
\begin{align}\label{caihong9}
\|\tau\|_{\widetilde{L}_t^{\infty}(\dot{B}_{2,1}^{\frac n2})}
\lesssim&\|\tau_0\|_{\dot{B}_{2,1}^{\frac {n}{2}}}+{\sum_{j\in\Z}2^{\frac {jn}{2}}\| \dot{\Delta}_j D(u)\|_{L^1_t(L^2)}}\nonumber\\
&+{\sum_{j\in\Z}2^{\frac {jn}{2}}\|[u\cdot\nabla, \dot{\Delta}_j] \tau\|_{L^1_t(L^2)}}
+{\sum_{j\in\Z}2^{\frac {jn}{2}}\|\dot{\Delta}_j Q(\tau, \nabla u)\|_{L^1_t(L^2)}}.
\end{align}
Thanks to  Lemmas \ref{daishu}, \ref{jiaohuanzi}, we have
\begin{align}
&{\sum_{j\in\Z}2^{\frac {jn}{2}}\|\dot{\Delta}_j Q(\tau, \nabla u)\|_{L^1_t(L^2)}}+
&
\sum_{j\in\Z}2^{\frac {jn}{2}}\|\dot{\Delta}_j([u\cdot\nabla, \dot{\Delta}_j]\tau)\|_{L^1_t(L^2)}
\lesssim\int^t_0\|\nabla u\|_{\dot{B}_{2,1}^{\frac {n}{2}}}\|\tau\|_{\dot{B}_{2,1}^{\frac n2}}ds.\label{caihong11}
\end{align}
 Inserting the above estimate into \eqref{caihong9}, we can get
\begin{align}\label{caihong99}
\|\tau\|_{\widetilde{L}_t^{\infty}(\dot{B}_{2,1}^{\frac n2})}
\lesssim&\|\tau_0\|_{\dot{B}_{2,1}^{\frac {n}{2}}}+\int^t_0\| u\|_{\dot{B}_{2,1}^{1+\frac {n}{2}}}ds+
\int^t_0\|\nabla u\|_{\dot{B}_{2,1}^{\frac {n}{2}}}\|\tau\|_{\dot{B}_{2,1}^{\frac n2}}ds.
\end{align}

Together with \eqref{nian8} and \eqref{caihong99}, one has
\begin{align}\label{nian10}
&\|(u,\tau)\|_{\widetilde{L}_t^{\infty}(\dot{B}_{2,1}^{\frac n2+1-\alpha})}+\|\tau\|_{\widetilde{L}_t^{\infty}(\dot{B}_{2,1}^{\frac n2})}
\nonumber\\
&\quad\lesssim\|(u_0,\tau_0)\|_{\dot{B}_{2,1}^{\frac n2+1-\alpha}}+\|\tau_0\|_{\dot{B}_{2,1}^{\frac {n}{2}}}
\nonumber\\
&\quad\quad+\int^t_0\| u\|_{\dot{B}_{2,1}^{1+\frac {n}{2}}}ds
+
\int^t_0\|\nabla u\|_{\dot{B}_{2,1}^{\frac {n}{2}}}(\|(u,\tau)\|_{\dot{B}_{2,1}^{\frac n2+1-\alpha}}+\|\tau\|_{\dot{B}_{2,1}^{\frac n2}})ds
\end{align}
which implies
\begin{align}\label{energy1}
\mathcal E_1(t)\le C\mathcal E_0(0)+\mathcal E_2(t)+C\mathcal E_1(t)\mathcal E_2(t).
\end{align}

Due to lack of dissipation of $\tau$ in \eqref{m}, in the above basic energy-type argument, we have to abandon the smooth effect of $u$ tentatively
(i.e. a good term $\| \dot{\Delta}_j\Lambda^{\frac{\alpha}{2}}u\|_{L^2}^2$  from \eqref{caihong4} to  \eqref{caihong7}). We will get back the smooth effect of $u$ and the hidden dissipation of $\p\div\tau$ in the next subsection.

\subsection{The estimates of $\mathcal E_2(t)$}

As discussed in the above subsection, the aim of this subsection is to find the dissipation of $u$ and $\tau$. In fact, we can only get the dissipation of $u$  and $\p\div\tau$. A key observation is that the equations about $u$ and $\Lambda^{-1}\p\div\tau$ have a good structure which can make the dissipation transfer from $u$ to $\Lambda^{-1}\p\div\tau$. In order to express our idea more precisely, we first
apply project operator $\p$ on both sides of the first two equations in \eqref{m}  to get
\begin{eqnarray}\label{G1}
\left\{\begin{aligned}
&\partial_t  u +\p (u\cdot \nabla u)+\Lambda^{\alpha}   u-\p \div \tau=0,\\
&\partial_t \p \div \tau+\p \div(u\cdot \nabla \tau) -\Delta u+\p \div(Q(\tau, \nabla u))=0.
\end{aligned}\right.
\end{eqnarray}



In the following, we introduce two magic new quantities:
 $$\aa\stackrel{\mathrm{def}}{=}\Lambda^{-1}\p\div\tau,\quad  w\stackrel{\mathrm{def}}{=}\Lambda^{\alpha-1} \aa-u.$$
Let us note that
 the velocity $u$ has good smoothing effect, we can get information on $\phi$ from information on $w.$

By using the new definitions of $\aa$ and $w,$ we deduce  from \eqref{G1} that
\begin{eqnarray}\label{G5}
\left\{\begin{aligned}
&\partial_t \aa+ u\cdot \nabla\aa+\Lambda u=f ,\\
&\partial_t  u + u\cdot \nabla u+\Lambda^{\alpha}   u-\Lambda \aa=g,\\
&\partial_t w+ u\cdot \nabla w+\La\aa=F,
\end{aligned}\right.
\end{eqnarray}
in which
\begin{align*}
&f\stackrel{\mathrm{def}}{=}- [\Lambda^{-1}\p\div,u\cdot \nabla ]\tau-\Lambda^{-1}\p \div(Q(\tau, \nabla u)),\\
&g\stackrel{\mathrm{def}}{=}-[\p ,u\cdot \nabla] u,\quad\quad
F\stackrel{\mathrm{def}}{=}-[\La^{\alpha-1}, u\cdot \nabla]\aa+\La^{\alpha-1} f-g.
\end{align*}

Considering the linear system of \eqref{G5}, we find  $\phi, u$ satisfy some kind of damped wave equations and have enough decay. The result in \cite{zhuyi}
is just based on  this fact by the time-weighted energy method.  However, the method used in \cite{zhuyi} seems  not to be work in the lower regularity spaces here. We shall use the localization technique and different commutator argument to get desired a priori estimates.

We begin to
apply $\ddj $ to the first equation in \eqref{G5}  that
\begin{align}\label{G6}
\partial_t \ddj \aa+u\cdot \nabla\ddj\aa+\ddj\La u=-[\ddj,u\cdot \nabla]\aa+\ddj f.
\end{align}
Taking $L^2$ inner product of $\ddj \aa$ with   \eqref{G6} and using integrating by parts, we have
\begin{align}\label{G7}
&\f12\f{d}{dt}\|{\ddj \aa}\|_{L^2}^2+\int_{\R^n}{\ddj \La\aa} \cdot\ddj u \,dx=-\int_{\R^n} [\ddj,u\cdot \nabla]\aa\cdot{\ddj \aa}\,dx+\int_{\R^n} \ddj f \cdot{\ddj \aa}\,dx.
\end{align}
Similarly, from the second  and third equations in \eqref{G5}
\begin{align}
\f12\f{d}{dt}\|{\ddj u}\|_{L^2}^2+    \|{\ddj \La^{\frac\alpha2}u}\|_{L^2}^2-&\int_{\R^n} \ddj \La \aa\cdot \ddj u\,dx\nonumber\\
&=-\int_{\R^n} [\ddj,u\cdot \nabla]u\cdot{\ddj u}\,dx+\int_{\R^n} \ddj g \cdot{\ddj u}\,dx,\label{G8}\\
\f12\f{d}{dt}\|{\ddj w}\|_{L^2}^2+\|{\ddj \La^{\frac\alpha2}\aa}\|_{L^2}^2-&\int_{\R^n}\ddj \La \aa\cdot\ddj u \,dx\nonumber\\
&=-\int_{\R^n} [\ddj,u\cdot \nabla]w\cdot{\ddj w}\,dx+\int_{\R^n} \ddj F \cdot{\ddj w}\,dx,\label{G9}
\end{align}
in which we have used the fact:
$$
\int_{\R^n} \ddj \La \aa\cdot \ddj w\,dx
=\|{\ddj \La^{\frac\alpha2} \aa}\|_{L^2}^2-\int_{\R^n}\ddj \La \aa\cdot\ddj u \,dx.
$$

Let $0<\eta<1$  be a small constant which will be determined later on. Summing up \eqref{G7}--\eqref{G9} and using the H\"older inequality and Berntein's lemma, we have
\begin{align}\label{G12}
&\f12\f{d}{dt}(\|{\ddj \aa}\|_{L^2}^2+(1-\eta)\|{\ddj u}\|_{L^2}^2+\eta\|{\ddj w}\|_{L^2}^2)
+ (1-\eta)2^{\alpha j} \|{\ddj u}\|_{L^2}^2+\eta2^{\alpha j}\|{\ddj \aa}\|_{L^2}^2
\nonumber\\
&\quad\lesssim\|{\ddj \aa}\|_{L^2}(\|[\ddj,u\cdot \nabla]\aa\|_{L^2}+\|{\ddj f}\|_{L^2})\nonumber\\
&\quad\quad+
\|{\ddj u}\|_{L^2}(\|[\ddj,u\cdot \nabla]u\|_{L^2}+\|{\ddj g}\|_{L^2})+\|{\ddj w}\|_{L^2}(\|[\ddj,u\cdot \nabla]w\|_{L^2}+\|{\ddj F}\|_{L^2}).
\end{align}
By Lemma \ref{bernstein} and $w\stackrel{\mathrm{def}}{=}\Lambda^{\alpha-1} \aa-u$,  we can  get
\begin{align}\label{}
\|{\ddj w}\|_{L^2}\le\|\ddj(\Lambda^{\alpha-1} \aa)\|_{L^2}+\|\ddj u\|_{L^2}\le C2^{({\alpha-1})j}\|\ddj \aa\|_{L^2}+\|\ddj u\|_{L^2}.
\end{align}

For any $2^j\le N_0$, we can find an $\eta>0$ small enough such that
\begin{align}\label{G13}
\|{\ddj \aa}\|_{L^2}^2+(1-\eta)\|{\ddj u}\|_{L^2}^2+\eta\|{\ddj w}\|_{L^2}^2
\ge \frac{1}{C}(\|{\ddj \aa}\|_{L^2}^2+\|{\ddj u}\|_{L^2}^2).
\end{align}
From \eqref{G12} and \eqref{G13}, one can deduce that
\begin{align}\label{G14}
&\|({\ddj u},{\ddj \aa})\|_{L^2}
+ 2^{\alpha j}\int_0^t\|({\ddj  u},{\ddj \aa})\|_{L^2}ds
\nonumber\\
&\quad\lesssim\|(\ddj u_0,\ddj \aa_0)\|_{L^2}+\int_0^t\|({\ddj f},{\ddj g},{\ddj F})\|_{L^2}ds\nonumber\\
&\quad\quad+\int_0^t(
\|[\ddj,u\cdot \nabla]\aa\|_{L^2}+\|[\ddj,u\cdot \nabla]u\|_{L^2}+\|[\ddj,u\cdot \nabla]w\|_{L^2})ds.
\end{align}

Multiplying by $2^{(\frac n2+1-\alpha)j}$  and summing over $2^j\le N_0$, we can further get
\begin{align}\label{G15}
&\|(u^\ell,\aa^\ell)\|_{\widetilde{L}_t^{\infty}(\dot{B}_{2,1}^{1+\frac n2-\alpha})}
+\int^t_0\| (u^\ell,\aa^\ell)\|_{\dot{B}_{2,1}^{1+\frac {n}{2}}}ds\nonumber\\
&\quad\lesssim\|(u_0^\ell,\aa_0^\ell)\|_{\dot{B}_{2,1}^{1+\frac n2-\alpha}}
+
\int^t_0\|(f,F,g)^\ell\|_{\dot{B}_{2,1}^{1+\frac n2-\alpha}}ds\nonumber\\
&\quad\quad+\int^t_0\sum_{2^j\le N_0}2^{(1+\frac n2-\alpha)j}(\|[\ddj,u\cdot \nabla]\aa\|_{L^2}+\|[\ddj,u\cdot \nabla]u\|_{L^2}+\|[\ddj,u\cdot \nabla]\Lambda^{\alpha-1} \aa\|_{L^2})ds.
\end{align}

In the following, we main concern the estimates for the high frequency part of the solution. We get similarly to  \eqref{G8}, \eqref{G9}  that
\begin{align}
&\f12\f{d}{dt}\|{\ddj u}\|_{L^2}^2+  \|{\ddj\La^{\frac\alpha2} u}\|_{L^2}^2- \|{\ddj\La^{1-\frac\alpha2} u}\|_{L^2}^2\nonumber\\
&\quad=-\int_{\R^n} [\ddj,u\cdot \nabla]u\cdot{\ddj u}\,dx+\int_{\R^n} \ddj \La^{2-\alpha}w \cdot \ddj u\,dx+\int_{\R^n} \ddj g \cdot{\ddj u}\,dx,\label{G16}\\
&\f12\f{d}{dt}\|{\ddj w}\|_{L^2}^2+\|{\ddj\La^{1-\frac\alpha2} w}\|_{L^2}^2\nonumber\\
&\quad=-\int_{\R^n} [\ddj,u\cdot \nabla]w\cdot{\ddj w}\,dx-\int_{\R^n} \ddj \La^{2-\alpha}u\cdot \ddj w\,dx+\int_{\R^n} \ddj F \cdot{\ddj w}\,dx,\label{G17}
\end{align}
in which we have used:
\begin{align*}
\int_{\R^n}\ddj\Lambda\aa\cdot\ddj udx
=&\|{\ddj\La^{1-\frac\alpha2} u}\|_{L^2}^2+\int_{\R^n}\La^{2-\alpha}\ddj w\cdot\ddj udx,\nonumber\\
\int_{\R^n}\ddj\Lambda\aa\cdot\ddj wdx
=&\|{\ddj\La^{1-\frac\alpha2} w}\|_{L^2}^2+\int_{\R^n}\La^{2-\alpha}\ddj u\cdot\ddj wdx.
\end{align*}

Summing up \eqref{G16}, \eqref{G17} and choosing  a suitable $N_0$ (for example $N_0=2^{\frac{1}{2(\alpha-1)}})$ such that for any $2^{j}\ge N_0$ there holds
 $C(2^{\alpha j} -2^{(2-\alpha) j})\ge\frac{C}{2}2^{(2-\alpha )j},$  then we have
\begin{align}\label{G20}
&\f12\f{d}{dt}(\|{\ddj u}\|_{L^2}^2+\|{\ddj w}\|_{L^2}^2)+ 2^{(2-\alpha )j}( \|{\ddj u}\|_{L^2}^2+\|{\ddj w}\|_{L^2}^2)\nonumber\\
&\quad\lesssim\|{\ddj u}\|_{L^2}(\|{\ddj g}\|_{L^2}+\|[\ddj,u\cdot \nabla]u\|_{L^2})+(\|{\ddj F}\|_{L^2}+\|[\ddj,u\cdot \nabla]w\|_{L^2})\|{\ddj w}\|_{L^2}.
\end{align}

Multiplying by $2^{(\frac n2+1-\alpha)j}$  and summing over $2^j\ge N_0$ imply that
\begin{align}\label{G22}
&\|(u^h,w^h)\|_{\widetilde{L}_t^{\infty}(\dot{B}_{2,1}^{1+\frac n2-\alpha})}
+\int^t_0\| (u^h, w^h)\|_{\dot{B}_{2,1}^{\frac {n}{2}+3-2\alpha}}ds\nonumber\\
&\quad\lesssim\|(u_0^h,w_0^h)\|_{\dot{B}_{2,1}^{\frac {n}{2}+1-\alpha}}
+
\int^t_0\|(F,g)^h\|_{\dot{B}_{2,1}^{\frac n2+1-\alpha}}ds\nonumber\\
&\quad\quad+\int^t_0\sum_{2^j\ge N_0}2^{(\frac n2+1-\alpha)j}(\|[\ddj,u\cdot \nabla]u\|_{L^2}+\|[\ddj,u\cdot \nabla]w\|_{L^2})
ds
.
\end{align}
Due to $\aa=\La^{1-\alpha}w+\La^{1-\alpha}u$ and $1<\alpha\le 2$, we can get
\begin{align*}
&\|\aa^h\|_{\widetilde{L}_t^{\infty}(\dot{B}_{2,1}^{\frac n2})}=\|(\La^{1-\alpha}w+\La^{1-\alpha}u)^h\|_{\widetilde{L}_t^{\infty}(\dot{B}_{2,1}^{\frac n2})}
\lesssim\|(u^h,w^h)\|_{\widetilde{L}_t^{\infty}(\dot{B}_{2,1}^{1+\frac n2-\alpha})},\nonumber\\
&\|\aa^h\|_{{L}_t^{1}(\dot{B}_{2,1}^{\frac n2+2-\alpha})}=\|(\La^{1-\alpha}w+\La^{1-\alpha}u)^h\|_{{L}_t^{1}(\dot{B}_{2,1}^{\frac n2+2-\alpha})}
\lesssim\|(u^h,w^h)\|_{{L}_t^{1}(\dot{B}_{2,1}^{\frac n2+3-2\alpha})},
\end{align*}
from which and \eqref{G22}, we yield
\begin{align}\label{G22+1}
&\|u^h\|_{\widetilde{L}_t^{\infty}(\dot{B}_{2,1}^{1+\frac n2-\alpha})}+\|\aa^h\|_{\widetilde{L}_t^{\infty}(\dot{B}_{2,1}^{\frac n2})}
+\int^t_0(\| u^h\|_{\dot{B}_{2,1}^{\frac {n}{2}+3-2\alpha}}+\|  \aa^h\|_{\dot{B}_{2,1}^{\frac {n}{2}+2-\alpha}})ds\nonumber\\
&\quad\lesssim\|u_0^h\|_{\dot{B}_{2,1}^{\frac {n}{2}+1-\alpha}}+\|\aa_0^h\|_{\dot{B}_{2,1}^{\frac {n}{2}}}
+
\int^t_0\|(F,g)^h\|_{\dot{B}_{2,1}^{\frac n2+1-\alpha}}ds\nonumber\\
&\quad\quad+\int^t_0\sum_{2^j\ge N_0}2^{(\frac n2+1-\alpha)j}(\|[\ddj,u\cdot \nabla]u\|_{L^2}+\|[\ddj,u\cdot \nabla]w\|_{L^2})
ds
.
\end{align}

From the  equation
$\partial_t  u + u\cdot \nabla u+\Lambda^{\alpha}   u-\Lambda \aa=g$ and the smoothing effect of the parabolic equation with fractional derivative, one can find that  we have lost $''2\alpha-2''$ order regularity for $u^h$ in  \eqref{G22+1}.
The reason why this case happened is that $u$ and $\aa$ have different order  smoothing effect in the high frequencies part.
Although we have lost $''2\alpha-2''$ order regularity for $u^h$ in  \eqref{G22+1}, we have obtained   the whole dissipation of $\aa^h$ luckily.
From the second equation in \eqref{G5}, we can get similarly to \eqref{G22} that
\begin{align}\label{G22+2}
&\|u^h\|_{\widetilde{L}_t^{\infty}(\dot{B}_{2,1}^{1+\frac n2-\alpha})}
+\int^t_0\| u^h\|_{\dot{B}_{2,1}^{\frac {n}{2}+1}}ds\nonumber\\
&\quad\lesssim\|u_0^h\|_{\dot{B}_{2,1}^{\frac {n}{2}+1-\alpha}}
+
\int^t_0\| \aa^h\|_{\dot{B}_{2,1}^{\frac n2+2-\alpha}}ds\nonumber\\
&\quad\quad+
\int^t_0\|g^h\|_{\dot{B}_{2,1}^{\frac n2+1-\alpha}}ds+\int^t_0\sum_{2^j\ge N_0}2^{(\frac n2+1-\alpha)j}\|[\ddj,u\cdot \nabla]u\|_{L^2}ds.
\end{align}

Multiplying by a suitable large constant on both sides of \eqref{G22+1} and then plusing \eqref{G22+2}, we can get for the high frequency of the solutions
\begin{align}\label{G22+3}
&\|u^h\|_{\widetilde{L}_t^{\infty}(\dot{B}_{2,1}^{1+\frac n2-\alpha})}+\|\aa^h\|_{\widetilde{L}_t^{\infty}(\dot{B}_{2,1}^{\frac n2})}
+\int^t_0(\| u^h\|_{\dot{B}_{2,1}^{\frac {n}{2}+1}}+\|  \aa^h\|_{\dot{B}_{2,1}^{\frac {n}{2}+2-\alpha}})ds\nonumber\\
&\quad\lesssim\|u_0^h\|_{\dot{B}_{2,1}^{\frac {n}{2}+1-\alpha}}+\|\aa_0^h\|_{\dot{B}_{2,1}^{\frac {n}{2}}}
+
\int^t_0\|(F,g)^h\|_{\dot{B}_{2,1}^{\frac n2+1-\alpha}}ds\nonumber\\
&\quad\quad+\int^t_0\sum_{2^j\ge N_0}2^{(\frac n2+1-\alpha)j}(\|[\ddj,u\cdot \nabla]u\|_{L^2}+\|[\ddj,u\cdot \nabla]w\|_{L^2})
ds
.
\end{align}

Combining with \eqref{G15} and \eqref{G22+3}, using the definition of $w\stackrel{\mathrm{def}}{=}\Lambda^{\alpha-1} \aa-u$, we get
\begin{align}\label{G24}
&\|(u,\aa^\ell)\|_{\widetilde{L}_t^{\infty}(\dot{B}_{2,1}^{\frac n2+1-\alpha})}+\|\aa^h\|_{\widetilde{L}_t^{\infty}(\dot{B}_{2,1}^{\frac n2})}+\int^t_0\| \aa^h\|_{\dot{B}_{2,1}^{\frac {n}{2}+2-\alpha}}ds
+\int^t_0\| (u,\aa^\ell)\|_{\dot{B}_{2,1}^{1+\frac {n}{2}}}ds\nonumber\\
&\quad\lesssim\|(u_0,\aa_0^\ell)\|_{\dot{B}_{2,1}^{\frac n2+1-\alpha}}+\|\aa_0^h\|_{\dot{B}_{2,1}^{\frac {n}{2}}}
\nonumber\\
&\quad\quad+
\int^t_0\|(f,g,F)\|_{\dot{B}_{2,1}^{\frac n2+1-\alpha}}ds+\int^t_0\sum_{j\in\Z}2^{({\frac n2+1-\alpha})j}\|[\ddj,u\cdot \nabla]\aa\|_{L^2}ds
\nonumber\\
&\quad\quad
+\int^t_0\sum_{j\in\Z}2^{({\frac n2+1-\alpha})j}(\|[\ddj,u\cdot \nabla]u\|_{L^2}+\|[\ddj,u\cdot \nabla]\Lambda^{\alpha-1} \aa\|_{L^2})ds.
\end{align}

Next, we give the estimates to the terms of the right-hand side of the above inequality.
By a simple computation, one has
\begin{align}\label{}
\dot{\Delta}_j([\p ,u\cdot \nabla] u)=[\dot{\Delta}_j\p,u\cdot \nabla] u-[\dot{\Delta}_j,u\cdot \nabla] u.
\end{align}
As the operator $\p$ is a zero-order Fourier multiplier, we can deduce from the Lemma \ref{jiaohuanzi} and the Remark \ref{com} that
\begin{align}\label{nei}
\|g\|_{\dot{B}_{2,1}^{\frac n2+1-\alpha}}
=&\sum_{j\in\Z}2^{(\frac n2+1-\alpha)j}\|\dot{\Delta}_j([\p ,u\cdot \nabla] u)\|_{L^2}\nonumber\\
\lesssim&\sum_{j\in\Z}2^{(\frac n2+1-\alpha)j}(\|[\dot{\Delta}_j\p,u\cdot \nabla] u\|_{L^2}+\|[\dot{\Delta}_j,u\cdot \nabla] u\|_{L^2})\nonumber\\
\lesssim&\|\nabla u\|_{\dot{B}_{2,1}^{\frac n2}}\|u\|_{\dot{B}_{2,1}^{\frac n2+1-\alpha}}.
\end{align}

Due  to the operator $\Lambda^{-1}\p\div$ is also a zero-order Fourier multiplier, we can get by a similar derivation of \eqref{nei} that
\begin{align}
\|[\Lambda^{-1}\p\div,u\cdot \nabla ]\tau\|_{\dot{B}_{2,1}^{\frac n2}}
\lesssim&\|\nabla u\|_{\dot{B}_{2,1}^{\frac n2}}\|\tau\|_{\dot{B}_{2,1}^{\frac n2}},\label{nei2}\\
\|[\Lambda^{-1}\p\div,u\cdot \nabla ]\tau\|_{\dot{B}_{2,1}^{\frac n2+1-\alpha}}
\lesssim&\|\nabla u\|_{\dot{B}_{2,1}^{\frac n2}}\|\tau\|_{\dot{B}_{2,1}^{\frac n2+1-\alpha}}\label{nei3}.
\end{align}

From  Lemma \ref{daishu}, we have
\begin{align}
\|\Lambda^{-1}\p \div(Q(\tau, \nabla u))\|_{\dot{B}_{2,1}^{\frac n2}}\lesssim&\|\nabla u\|_{\dot{B}_{2,1}^{\frac n2}}\|\tau\|_{\dot{B}_{2,1}^{\frac n2}}, \label{G26-1}\\
\|\Lambda^{-1}\p \div(Q(\tau, \nabla u))\|_{\dot{B}_{2,1}^{\frac n2+1-\alpha}}\lesssim&\|\nabla u\|_{\dot{B}_{2,1}^{\frac n2}}\|\tau\|_{\dot{B}_{2,1}^{\frac n2+1-\alpha}}.\label{G26-2}
\end{align}
The  combination of \eqref{nei2}--\eqref{G26-1} gives
\begin{align}\label{G26}
\|f\|_{\dot{B}_{2,1}^{\frac n2+1-\alpha}}\lesssim\|\nabla u\|_{\dot{B}_{2,1}^{\frac n2}}\|\tau\|_{\dot{B}_{2,1}^{\frac n2+1-\alpha}},\quad
\|\La^{\alpha-1} f\|_{\dot{B}_{2,1}^{\frac n2+1-\alpha}}\lesssim\| f\|_{\dot{B}_{2,1}^{\frac n2}}\lesssim\|\nabla u\|_{\dot{B}_{2,1}^{\frac n2}}\|\tau\|_{\dot{B}_{2,1}^{\frac n2}}.
\end{align}
By Lemma \ref{xin}, we have
\begin{align*}
&\|[\La^{\alpha-1}, u\cdot \nabla]\aa\|_{\dot{B}_{2,1}^{\frac n2+1-\alpha}}\lesssim \|\nabla u\|_{\dot{B}_{2,1}^{\frac n2}}\|\aa\|_{\dot{B}_{2,1}^{\frac n2}} \lesssim \|\nabla u\|_{\dot{B}_{2,1}^{\frac n2}}\|\tau\|_{\dot{B}_{2,1}^{\frac n2}} .
\end{align*}
from which and  \eqref{nei}, \eqref{G26},  we  can get
\begin{align}\label{G28}
&\|F\|_{\dot{B}_{2,1}^{\frac n2+1-\alpha}}\lesssim\|\nabla u\|_{\dot{B}_{2,1}^{\frac n2}}(\|u\|_{\dot{B}_{2,1}^{\frac n2+1-\alpha}}+\|\tau\|_{\dot{B}_{2,1}^{\frac n2}}).
\end{align}
By using Lemma \ref{jiaohuanzi}, we can get
\begin{align}\label{G281}
&\sum_{j\in\Z}2^{(\frac n2+1-\alpha)j}(\|[\ddj,u\cdot \nabla]\aa\|_{L^2}+\|[\ddj,u\cdot \nabla]u\|_{L^2}+\|[\ddj,u\cdot \nabla]\La^{\alpha-1}\aa\|_{L^2})\nonumber\\
&\quad\lesssim\|\nabla u\|_{\dot{B}_{2,1}^{\frac n2}}(\|\aa\|_{\dot{B}_{2,1}^{\frac n2+1-\alpha}}+\|u\|_{\dot{B}_{2,1}^{\frac n2+1-\alpha}}+\|\La^{\alpha-1}\aa\|_{\dot{B}_{2,1}^{\frac n2+1-\alpha}})\nonumber\\
&\quad\lesssim\|\nabla u\|_{\dot{B}_{2,1}^{\frac n2}}(\|(u,\tau)\|_{\dot{B}_{2,1}^{\frac n2+1-\alpha}}+\|\tau\|_{\dot{B}_{2,1}^{\frac n2}}).
\end{align}

Inserting \eqref{nei}, \eqref{G26}, \eqref{G28} and \eqref{G281} into \eqref{G24} gives
\begin{align}\label{G30}
&\|(u,\aa^\ell)\|_{\widetilde{L}_t^{\infty}(\dot{B}_{2,1}^{\frac n2+1-\alpha})}+\|\aa^h\|_{\widetilde{L}_t^{\infty}(\dot{B}_{2,1}^{\frac n2})}+\int^t_0 \| \aa^h\|_{\dot{B}_{2,1}^{\frac {n}{2}+2-\alpha}}ds
+\int^t_0\| (u,\aa^\ell)\|_{\dot{B}_{2,1}^{1+\frac {n}{2}}}ds\nonumber\\
&\quad\lesssim\|(u_0,\tau_0^\ell)\|_{\dot{B}_{2,1}^{\frac n2+1-\alpha}}+\|\tau_0^h\|_{\dot{B}_{2,1}^{\frac {n}{2}}}
+
\int^t_0\|\nabla u\|_{\dot{B}_{2,1}^{\frac n2}}(\|(u,\tau)\|_{\dot{B}_{2,1}^{\frac n2+1-\alpha}}+\|\tau\|_{\dot{B}_{2,1}^{\frac n2}})ds
\end{align}
which implies that
\begin{align}\label{energy2}
\mathcal E_2(t)\le C\mathcal E_0(0)+C\mathcal E_1(t)\mathcal E_2(t).
\end{align}

\subsection{Proof of the Theorem \ref{th1}}

Now, let us  give the proof of Theorem \ref{th1}. Multiplying by a suitable large constant on both sides of \eqref{energy2} and then plusing \eqref{energy1}, we can get
\begin{align}\label{energy3}
\mathcal E(t)\le C_1\mathcal E_0(0)+C_1\mathcal E_1(t)\mathcal E_2(t))\le C_2\mathcal E_0(0)+C_2\mathcal E^2(t)
\end{align}
for some positive constant $C_2$.

Under the setting of initial data in Theorem\ref{th1},  there exists a positive constant $C_3$ such that
$ \mathcal E (0) + C_2 \mathcal E_0(0)\leq C_3 \varepsilon$. Due to the local existence result which can be achieved similarly to \cite{chenqionglei}, there exists a positive time $T$ such that
\begin{equation}\label{re}
 \mathcal E (t) \leq 2 C_3\varepsilon , \quad  \forall \; t \in [0, T].
\end{equation}
Let $T^{*}$ be the largest possible time of $T$ for what \eqref{re} holds. Now, we only need to show $T^{*} = \infty$.  By the estimate of total energy \eqref{energy3}, we can use
 a standard continuation argument to show that $T^{*} = \infty$ provided that $\varepsilon$ is small enough.  We omit the details here. Hence, we finish the proof of Theorem \ref{th1}.

\section{The proof of the Theorem \ref{th2}}

In this section, we prove the second theorem of the present paper.
The local wellposedness of  \eqref{m1} with  $(u_0,\tau_0)\in H^\sigma(\R^2), (\sigma>4)$ can be obtained by using a   standard energy argument. Here, we omit it. To proved the global wellposedness, it suffices to obtain a global a priori bound for $\|(\u,\tau_\nu)\|_{H^\sigma}$. In fact, we only need to bound  $$\int_0^T\|(\nabla u_\nu,\nabla \tau_\nu)\|_{L^\infty}dt<\infty.$$
To do this, we will prove  $\|(\u,\tau_\nu)\|_{H^s}<\infty$ for any $0\le s\le \sigma-2$. By Theorem \ref{global}, one can deduce that $(u,\tau)\in C([0,T];H^\sigma(\R^2))$. Thus, in the following, we main concern with the bound of $\|(\u-u,\tau_\nu-\tau)\|_{H^s}.$
The proof in the following borrows some idea from \cite{wujiahong}.

Denote $\bar{u}=\u-u$, $\bar{\pi}=\pi_\nu-\pi$, and $\bar{\tau}=\tau_\nu-\tau$.
From \eqref{m1} and \eqref{m2}, we can deduce that $(\v,\w)$ satisfies the following equations:
\begin{eqnarray}\label{m3}
\left\{\begin{aligned}
&\partial_t \v+ u\cdot\nabla \v+\v\cdot\nabla (u+\v)-\nu \Delta (u+\v)+\nabla \bar{\pi}-  \div \bar{\tau}=0,\\
&\partial_t\bar{\tau} + u\cdot \nabla \bar{\tau}+ \v\cdot \nabla (\tau+\bar{\tau})  -\Delta\bar{\tau}-  D (\v)=0,\\
&\div \v =0,\\
&(\v,\w)|_{t=0}=(0,0).
\end{aligned}\right.
\end{eqnarray}

Denote $$J^{s}f\stackrel{\mathrm{def}}{=}\mathcal{F}^{-1}[(1+|\xi|^2)^{\frac{s}{2}}\hat{f}(\xi))].$$

Taking the $L^2$ inner product with $J^s\v,J^s\w$ to the first and second equation of \eqref{m3}, respectively, we have
\begin{align}\label{a1}
&\f12\frac{d}{dt}\|\v\|_{H^s}^2+\nu\|\nabla\v\|_{H^s}^2\nonumber\\
&\quad=\int_{\R^2}J^s(\div \bar{\tau})\cdot J^s\v \,dx-\nu\int_{\R^2}J^s(\Delta u)\cdot J^s\v \,dx\nonumber\\
&\quad=-\int_{\R^2}J^s(u\cdot\nabla \v)\cdot J^s\v \,dx-\int_{\R^2}J^s(\v\cdot\nabla u)\cdot J^s\v \,dx-\int_{\R^2}J^s(\v\cdot\nabla \v)\cdot J^s\v \,dx,
\end{align}
and
\begin{align}\label{a2}
\f12\frac{d}{dt}\|\w\|_{H^s}^2+\|\nabla\w\|_{H^s}^2
=&\int_{\R^2}J^s( D (\v))\cdot J^s\w \,dx
-\int_{\R^2}J^s(u\cdot\nabla \w)\cdot J^s\w \,dx
\nonumber\\
&-\int_{\R^2}J^s(\v\cdot\nabla \tau)\cdot J^s\w \,dx-\int_{\R^2}J^s(\v\cdot\nabla \w)\cdot J^s\w \,dx.
\end{align}

Summing up \eqref{a1} and \eqref{a2} and using the following cancelation,
$$
\int_{\R^2}J^s(\div \bar{\tau})\cdot J^s\v \,dx+\int_{\R^2}J^s( D (\v))\cdot J^s\w \,dx=0,
$$
 we get
\begin{align}\label{a3}
\f12\frac{d}{dt}\|(\v,\w)\|_{H^s}^2
\le &{\left|\nu\int_{\R^2}J^s(\Delta u)\cdot J^s\v \,dx\right|}\nonumber\\
&+{\left|\int_{\R^2}J^s(u\cdot\nabla \v)\cdot J^s\v \,dx-\int_{\R^2}J^s(u\cdot\nabla \w)\cdot J^s\w \,dx\right|}
\nonumber\\
&+{\left|\int_{\R^2}J^s(\v\cdot\nabla \v)\cdot J^s\v \,dx-\int_{\R^2}J^s(\v\cdot\nabla \w)\cdot J^s\w \,dx\right|}\nonumber\\
&+{\left|\int_{\R^2}J^s(\v\cdot\nabla u)\cdot J^s\v \,dx-\int_{\R^2}J^s(\v\cdot\nabla \tau)\cdot J^s\w \,dx\right|}.
\end{align}
Applying the H\"older inequality, we have
\begin{align}\label{a4}
\left|\nu\int_{\R^2}J^s(\Delta u)\cdot J^s\v \,dx\right|\le C\nu\|u\|_{H^{s+2}}\|\v\|_{H^{s}}.
\end{align}
Due to $\div u=0,$ one can use integrating by part to get
$$
\int_{\R^2}u\cdot\nabla J^s\v\cdot J^s\v \,dx=\int_{\R^2}u\cdot\nabla J^s\w\cdot J^s\w \,dx,$$
from which we can deduce that
\begin{align}\label{a5}
&\left|\int_{\R^2}J^s(u\cdot\nabla \v)\cdot J^s\v \,dx+\int_{\R^2}J^s(u\cdot\nabla \w)\cdot J^s\w \,dx\right|\nonumber\\
&\quad=\left|\int_{\R^2}(J^s(u\cdot\nabla \v)-u\cdot\nabla J^s\v)\cdot J^s\v \,dx+\int_{\R^2}(J^s(u\cdot\nabla \w)-u\cdot\nabla J^s\w)\cdot J^s\w \,dx\right|\nonumber\\
&\quad\le C(\|J^s(u\cdot\nabla \v)-u\cdot\nabla J^s\v\|_{L^2}\|J^s\v\|_{L^{2}}+\|J^s(u\cdot\nabla \w)-u\cdot\nabla J^s\w\|_{L^2}\|J^s\w\|_{L^{2}})\nonumber\\
&\quad\le C(\|\nabla u\|_{L^\infty}\|J^s\v\|_{L^2}+\|\nabla \v\|_{L^\infty}\|J^su\|_{L^2})\|J^s\v\|_{L^2}\nonumber\\
&\quad\quad+C(\|\nabla u\|_{L^\infty}\|J^s\w\|_{L^2}+\|\nabla \w\|_{L^\infty}\|J^su\|_{L^2})\|J^s\w\|_{L^2}\nonumber\\
&\quad\quad\le C\|u\|_{H^{s}}(\|\v\|_{H^{s}}^2+\|\w\|_{H^{s}}^2)
\end{align}
where we have used the following
 commutator estimate of Kato and Ponce \cite{ponce}:
\begin{align*}
\big\| J^{s} (u\cdot\nabla{v})-u\cdot\nabla J^{s}{v}\big\|_{L^2}\le C(\|\nabla u\|_{L^\infty}\|J^{s-1}\nabla v\|_{L^2}+\|\nabla v\|_{L^\infty}\|J^{s}u\|_{L^2}), \forall\, s\ge0 .
\end{align*}

Exact the same line as \eqref{a5}, one has
\begin{align}\label{a6}
\left|\int_{\R^2}J^s(\v\cdot\nabla \v)\cdot J^s\v \,dx+\int_{\R^2}J^s(\v\cdot\nabla \w)\cdot J^s\w \,dx\right|
\le C\|\v\|_{H^{s}}(\|\v\|_{H^{s}}^2+\|\w\|_{H^{s}}^2).
\end{align}
Using the fact that $H^s(\R^2)$ is an algebra for any $s>1$, thus, we can get
\begin{align}\label{a7}
 &\left|\int_{\R^2}J^s(\v\cdot\nabla u)\cdot J^s\v \,dx+\int_{\R^2}J^s(\v\cdot\nabla \tau)\cdot J^s\w \,dx\right|\nonumber\\
&\quad\le C(\|J^s(\v\cdot\nabla u)\|_{L^2}\|J^s\v\|_{L^2}+\|J^s(\v\cdot\nabla \tau)\|_{L^2}\|J^s\w\|_{L^2})\nonumber\\
&\quad\le C(\|\nabla u\|_{H^{s}}\|\v\|_{H^{s}}^2+\|\nabla\tau\|_{H^{s}}\|\v\|_{H^{s}}\|\w\|_{H^{s}})\nonumber\\
&\quad\le C(\| u\|_{H^{s+1}}\|\v\|_{H^{s}}^2+\|\tau\|_{H^{s+1}}\|\v\|_{H^{s}}\|\w\|_{H^{s}}).
\end{align}

Inserting \eqref{a4}--\eqref{a7} into \eqref{a3}  and letting
$$G(t)\stackrel{\mathrm{def}}{=}\|\v\|_{H^s}+\|\w\|_{H^s},\quad M(t)\stackrel{\mathrm{def}}{=}\|u\|_{H^{s+1}}+\|\tau\|_{H^{s+1}},$$
we can get
\begin{align*}
\f{d}{dt}G(t)\le \nu\|u(t)\|_{H^{s+2}}+C_1M(t)G(t)+C_2G^2(t).
\end{align*}

Multiplying by $\mathrm{e}^{-C_1\int_0^tM(t')dt'}$ on both hand side of the above inequality, we have
\begin{align}\label{a9}
\f{d}{dt}(G\,\mathrm{e}^{-C_1\int_0^tM(t')dt'})\le \nu\|u\|_{H^{s+2}}\,\mathrm{e}^{-C_1\int_0^tM(t')dt'}+C_2G^2\,\mathrm{e}^{-C_1\int_0^tM(t')dt'}.
\end{align}
By Lemma 1.3 of  \cite{constain}, if we set
\begin{align*}
\nu_0=\left(8C_2\int_0^T\|u(t')\|_{H^\sigma}\,\mathrm{e}^{C_1\int_{t'}^{T} M(\xi)d\xi}dt'\right)^{-1},
\end{align*}
then, for $0<\nu\le\nu_0$ and $0\le t\le T,$ we infer from \eqref{a9} that
\begin{align}\label{a11}
G(t)\le 12\nu\int_0^T\|u(t')\|_{H^\sigma}\,\mathrm{e}^{C_1\int_{t'}^{T} M(\xi)d\xi}dt'.
\end{align}
From \eqref{a11}, one can deduce that
 $$\|(\u,\tau_\nu)-(u,\tau)\|_{H^s}\le C(T)\nu,$$
where $ C(T)$ is  a constant dependent on $T$ and $\|(u,\tau)\|_{L^\infty([0,T];H^{\sigma})}$.

Consequently, we  have completed  the proof of Theorem \ref{th2}.

    \bigskip

\noindent \textbf{Acknowledgement.} {The first author of this work is supported by NSFC under the grant number 11601533,  and the
Science and Technology Program of Shenzhen under grant number 20200806104726001.
The third author is supported by NSFC under the grant number 11571118 and
the NSFC key project under the grant number 11831003.}


\begin{thebibliography}{99}



\bibitem{bcd}
H.~Bahouri, J.Y. Chemin, R.~Danchin.
\newblock { {F}ourier {A}nalysis and {N}onlinear {P}artial {D}ifferential
  {E}quations}.
\newblock Grundlehren Math. Wiss. , vol. {\textbf{343}}, Springer-Verlag,
  Berlin, Heidelberg, 2011.
\newblock $\,$

\bibitem{bird}
R.B. Bird, C.F. Curtiss, R.C. Armstrong, O. Hassager.
\newblock Dynamics of polymeric liquids.
\newblock {Fluid Mechanics, vol. 1,  2nd edn
Wiley, New York, 1987.}
\newblock $\,$


\bibitem{chemin}
J.Y. Chemin, N. Masmoudi.
\newblock About lifespan of regular solutions of equations related to viscoelastic fluids.
\newblock {\it SIAM J. Math. Anal.}, {\bf 33}, 84--112, 2001.
\newblock $\,$



\bibitem{chenqionglei}
Q. Chen, X. Hao.
\newblock Global well-posedness in the critical Besov spaces for the incompressible Oldroyd-B model without damping mechanism.
\newblock {\it ArXiv:1810.06171.}
\newblock $\,$


\bibitem{chenqionglei1}
Q. Chen, C. Miao.
\newblock Global well-posedness of viscoelastic fluids of Oldroyd type in Besov spaces.
\newblock {\it Nonlinear Analysis}, {\bf 68}, 1928--1939, 2008.
\newblock $\,$


\bibitem{constain}
P. Constain.
{Note on loss of regularity for solutions of the 3D incompressible Euler and related equations}.
\newblock {\it  Commun.  Math. Phys}, {\bf 104},  311--326, 1986.
\newblock $\,$

\bibitem{constanin}
P. Constantin, M. Kliegl.
\newblock Note on global regularity for two-dimensional Oldroyd-B fluids with diffusive stress.
\newblock {\it Arch. Ration. Mech. Anal.}, {\bf 206}, 725--740, 2012.
\newblock $\,$



\bibitem{elgindijde}
T.M. Elgindi,  J. Liu.
{Global wellposedness to the generalized Oldroyd type models in $\mathbb{R}^3$.}
\newblock {\it  J. Differential Equations}, {\bf 259},  1958--1966, 2015.
\newblock $\,$


\bibitem{elgindicpam}
T.M. Elgindi,  F. Rousset.
{Global regularity for some Oldroyd-B type models}.
\newblock {\it  Comm. Pure Appl. Math.}, {\bf 68},  2005--2021, 2015.
\newblock $\,$




\bibitem{Fefferman}
C.L. Fefferman, D.S. McCormick, J.C. Robinson, J.L. Rodrigo.
\newblock {Higher order commutator estimates and local existence for the non-resistive MHD equations and related models.}
\newblock {\it J. Funct. Anal.}, {\bf 267}, 1035--1056, 2014.
\newblock $\,$


\bibitem{fern}
E.Fern\'andez-Cara, F.Guill\'en, R.Ortega.
\newblock {Some theoretical results concerning non-Newtonian fluids of the Oldroyd kind.}
\newblock {\it Ann.Sc.Norm.Super.Pisa}, {\bf 26}, 1--29, 1998.
\newblock $\,$







\bibitem{GS}
C. Guillop\'{e}, J.C. Saut.
{Existence results for the flow of viscoelastic fluids with a differential constitutive law}.
\newblock {\it  Nonlinear Anal.}, {\bf 15},  849--869, 1990.
\newblock $\,$





\bibitem{GS2}
C. Guillop\'{e}, J.C. Saut.
{Global existence and one-dimensional nonlinear stability of shearing motions of viscoelastic fluids of Oldroyd type}.
\newblock {\it  RAIRO Mod\'{e}l. Math. Anal. Num\'{e}r.}, {\bf 24},  369--401, 1990.
\newblock $\,$


\bibitem{ponce}
T. Kato, G. Ponce.
{Commutator estimates and the Euler and Navier-Stokes equations}.
\newblock {\it  Comm. Pure Appl. Math.}, {\bf 41},  891--907, 1988.
\newblock $\,$












\bibitem{Larson}
R.G. Larson.
{The structure and rheology of complex fluids}.
 Oxford University Press, New York, 1995.





\bibitem{leizhenjde}
Z. Lei, N. Masmoudi,  Y. Zhou.
{Remarks on the blowup criteria for Oldroyd models}.
\newblock {\it  J. Differential Equations}, {\bf 248}, 328--341, 2010.
\newblock $\,$


\bibitem{LZ}
Z. Lei,  Y. Zhou.
{Global existence of classical solutions for the two-dimensional Oldroyd
model via the incompressible limit}.
\newblock {\it  SIAM J. Math. Anal.}, {\bf 37}, 797--814, 2005.
\newblock $\,$




\bibitem{lin2012}
F. Lin.
\newblock  Some analytical issues for elastic complex fluids.
\newblock {\it Comm. Pure Appl. Math.}, {\bf 65}, 893--919, 2012.
\newblock $\,$

\bibitem{LinLZ}
F. Lin, C. Liu,  P. Zhang.
{On hydrodynamics of viscoelastic fluids}.
\newblock {\it  Comm. Pure Appl. Math.}, {\bf 58}, 1437--1471, 2005.
\newblock $\,$



\bibitem{zhangping2008}
F. Lin, P. Zhang.
{On the initial-boundary value problem of the incompressible viscoelastic fluid system}.
\newblock {\it  Comm. Pure Appl. Math.}, {\bf 61}, 539--558, 2008.
\newblock $\,$







\bibitem{zhangping2015}
F. Lin, P. Zhang.
\newblock Global small solutions to a complex fluid model in three dimensional.
\newblock {\it Arch.
Ration. Mech. Anal.}, {\bf 216}, 905--920, 2015.
\newblock $\,$


\bibitem{LM}
P.L. Lions, N. Masmoudi.
{Global solutions for some Oldroyd models of non-Newtonian flows}.
\newblock {\it  Chinese Ann. Math. Ser. B}, {\bf 21}, 131--146, 2000.
\newblock $\,$

\bibitem{Oldroyd}
J. Oldroyd.
{Non-Newtonian effects in steady motion of some idealized elastico-viscous
liquids}.
\newblock {\it  Proc. Roy. Soc. Edinburgh Sect. A}, {\bf 245}, 278--297,1958.
\newblock $\,$




\bibitem{wujiahong}
J. Wu, X. Xu.
{Wellposedness and inviscid limits of the Boussinesq equations with fractional Laplacian dissipation}.
\newblock {\it  Nonlinearity}, {\bf 27}, 2215--2232, 2014.
\newblock $\,$





\bibitem{zhaixiaoping}
X. Zhai.
\newblock Global  solutions to the  $n$-dimensional incompressible Oldroyd-B model without damping mechanism.
\newblock {\it  J. Math. Phys.}, {\bf62},  021503, 17 pp, 2021.
\newblock $\,$


\bibitem{zhaixiaoping2}
X. Zhai.
\newblock Optimal decay for the n-dimensional incompressible Oldroyd-B model without damping mechanism.
\newblock {\it  ArXiv:1905.02604  }.
\newblock $\,$


\bibitem{zhuyi}
Y. Zhu.
\newblock Global small solutions of 3D incompressible Oldroyd-B model without damping mechanism.
\newblock {\it  J. Funct. Anal.}, {\bf 274}, 2039--2060, 2018.
\newblock $\,$





\bibitem{zuiruizhao}
R. Zi, D. Fang, T. Zhang.
\newblock Global solution to the incompressible
Oldroyd-B model in the critical $L^p$
framework: the case of the non-small
coupling parameter.
\newblock {\it  Arch. Rational Mech. Anal.}, {\bf 213},  651--687, 2014.
\newblock $\,$













\end{thebibliography}
\end{document}